\theoremstyle{plain}
\newtheorem{thm}{Theorem}[section]
\newtheorem{prop}[thm]{Proposition}
\newtheorem{cor}[thm]{Corollary}
\newcommand{\eps}{\varepsilon}
\newcommand{\C}{\mathbb{C}}
\newcommand{\Z}{\mathbb{Z}}
\newcommand{\G}{\mathcal{G}}
\newcommand{\K}{\mathbb{K}}
\newcommand{\R}{\mathbb{R}}
\newcommand{\M}{\mathrm{M}}
\newcommand{\Mc}{\mathcal{M}}
\newcommand{\Lc}{\mathcal{L}}
\newcommand{\Tr}{\mathbf{Tr}}
\newcommand{\tr}{\mathrm{Tr}}
\newcommand{\SL}{\mathrm{SL}_n}
\newcommand{\GL}{\mathrm{GL}_n}
\newcommand{\SLth}{\mathrm{SL}_3}
\newcommand{\CC}{\mathcal{C}}
\newcommand{\be}{\begin{equation}}
\newcommand{\ee}{\end{equation}}
\newcommand{\old}[1]{}
\theoremstyle{definition}
\title{Dimers, webs, and
local systems}
\author{Daniel C. Douglas, Richard Kenyon, Haolin Shi}
\begin{document}
%	\layout % toggle for formatting  

\maketitle

\abstract{
For a planar bipartite graph $\G$ equipped with a $\SL$-local system, we show that the 
determinant of the associated Kasteleyn matrix counts ``$n$-multiwebs" (generalizations of $n$-webs) in $\G$, weighted by their web-traces. We use this fact to study 
random $n$-multiwebs in graphs on some simple surfaces.}

\section{Introduction}

\subsection{Multiwebs}
\label{ssec:multiwebs}

Let $\G=(V=B\cup W,E)$ be a bipartite graph.
An \emph{$n$-multiweb $m$ of $\G$} is a multiset of edges with degree $n$ at each vertex,
that is, a mapping $m:E\to\{0,1,2,\dots\}$ such that for each vertex $v\in\G$ we have $\sum_{u\sim v} m_{uv}=n$.
In other words each vertex is an endpoint of exactly $n$ edges of $m$, counted with multiplicity. 
See Figure \ref{3multiweb} for a $3$-multiweb in a grid.  We let $\Omega_n(\G)$ be the set of $n$-multiwebs of $\G$. 
The existence of an $n$-multiweb forces $\G$ to be \emph{balanced},
that is, $\G$ has the same number $N$ of white vertices as black vertices: $V=B\cup W$ with $N=|B|=|W|$.

The notion of $n$-multiweb was introduced in \cite{Fraser19TransAmerMathSoc}, where the terminology \emph{$n$-weblike subgraph} was used.
Multiwebs are generalizations of what, in the literature, are called \textit{webs}, namely regular bipartite graphs.
\begin{figure}
\center{\includegraphics[width=2in]{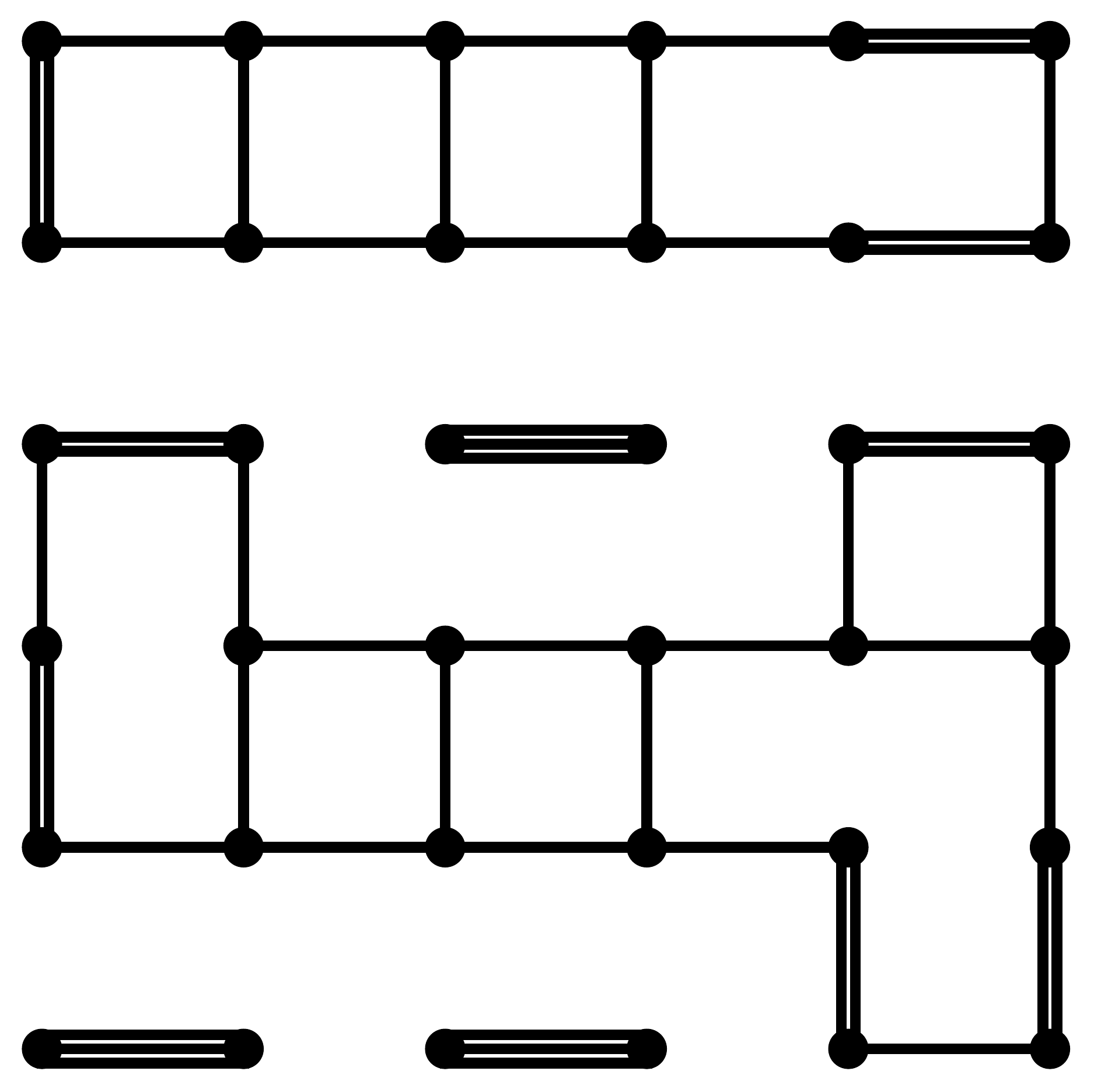}}
\caption{\label{3multiweb}A $3$-multiweb in the $6\times6$ grid.}
\end{figure}

When $n=1$, an $n$-multiweb of $\G$ is a \emph{dimer cover} of $\G$, also known as a \emph{perfect matching} of $\G$.
When $n=2$ an $n$-multiweb of $\G$ is a \emph{double dimer cover}. Dimer covers and double dimer covers are classical combinatorial objects studied starting in the 
1960's by Kasteleyn \cite{Kasteleyn61Physica}, Temperley/Fisher \cite{Temperley61PhilosMag}, and many others, see e.g. \cite{Kenyon09Statisticalmechanics} for a survey. Our goal here is to
study $n$-fold dimer covers, or equivalently, $n$-multiwebs, for $n\ge 3$.

In \cite{Kenyon14CMP}, $\text{SL}_2$-local systems
were used to study topological properties of double dimer covers on planar graphs. We extend this here to $\SL$-local systems for $n\ge 3$.
On a bipartite graph $\G$ on a surface with an $\SL$-local system $\Phi$, we define the \emph{trace} $\Tr(m)=\Tr(m,\Phi)$ of an $n$-multiweb $m$, a small generalization
of the trace of an $n$-web common in the literature. 
Traces of webs are used in the study of tensor networks,
representation theory, cluster algebras, and knot theory \cite{Jaeger92DiscreteMath, Kuperberg96CommMathPhys, SikoraTrans01, Morse10Involve, FominAdvMath16, Fraser19TransAmerMathSoc}. In this paper we
study these traces from a probabilistic and combinatorial point of view. 

To distinguish our trace from the trace of a matrix we should in principle
refer to it as a \emph{web-trace}. However we say ``trace" when there is no risk of confusion (we need to be careful precisely when the multiweb is a loop, because the web-trace for a $\GL$ connection
is not generally equal to the trace of the associated monodromy around the loop; see Section \ref{loop}).  

Our main result computes the determinant of a certain operator $K(\Phi)$, the \emph{Kasteleyn matrix} for the planar bipartite graph $\G$ in the presence of an 
$\SL$-local system $\Phi$, as a sum of traces of multiwebs:
\bigskip

\noindent{\bf Theorem. } Up to a global sign,
$ \det \tilde K(\Phi) = \sum_{n\text{-multiwebs } m \in \Omega_n(\G)} \Tr(m)$.
\bigskip

Here $\tilde K \in \M_{nN}(\R)$ (the set of $nN$ by $nN$ real matrices) is obtained from $K \in \M_N(\M_n(\R))$ in the obvious way.
For the definitions and precise statement, see below and Theorem \ref{main}.
This theorem holds more generally for $\mathrm{M}_n$-connections (connections with parallel transports in $\mathrm{M}_n(\R) = \text{End}(\R^n)$). 

In the case $n=1$, we have $\Tr(m) = 1$  for any $1$-multiweb $m$ for an $\text{SL}_1$-local system, or simply the product of edge weights for an $\mathrm{M}_1$-connection; in this case $K$ is the usual Kasteleyn matrix.  In this sense 
our result generalizes Kasteleyn's theorem from \cite{Kasteleyn61Physica}. 

In the case $n=2$, we give a new proof of (a slightly more general version of) a theorem of \cite{Kenyon14CMP} enumerating double-dimer covers, see Section \ref{ddsection}.

As another application of the theorem, in the case $n=3$ we show how to enumerate isotopy classes of ``reduced" $3$-multiwebs (see below), 
on either the annulus or the pair of pants (see Sections \ref{annulussection} and \ref{pantssection}).

\subsection{Colorings}
\label{ssec:colorings}
For the \emph{identity} connection $\Phi\equiv I$ (where the identity matrix is assigned to every edge), the trace of an $n$-multiweb has a simple combinatorial interpretation. The trace
for the identity connection is a signed count of the number of edge-$n$-colorings (see Proposition \ref{tracecolor} below),
and in fact for planar multiwebs, it is the unsigned number of edge-$n$-colorings (see Proposition \ref{tracecolorplanar} below).
Here, an \emph{edge-$n$-coloring} of an $n$-multiweb $m$ is a coloring of the edges of 
$m$ with colors from $\CC=\{1,2,\dots,n\}$ so that at each vertex all $n$ colors are present.
More precisely, an edge-$n$-coloring is a map from the edges of $m$ into $2^{\CC}$, the set of subsets of $\CC$, 
with the property that, first, 
an edge of multiplicity $k$ maps to a subset of $\CC$ of size $k$, and secondly, the union of the color sets over all edges at
a vertex is $\CC$. For example, the multiweb appearing in Figure \ref{3multiweb} has five components. We can calculate that there are 24 ways to color the component on the top and 48 ways to color the other nontrivial component. There is a unique way to color a tripled edge. So $m$ has $48*24*1*1*1=1152$ edge-$3$-colorings and the trace of this multiweb is 
$\Tr(m,I)=1152$ .

\begin{figure}
\center{\includegraphics[width=2in]{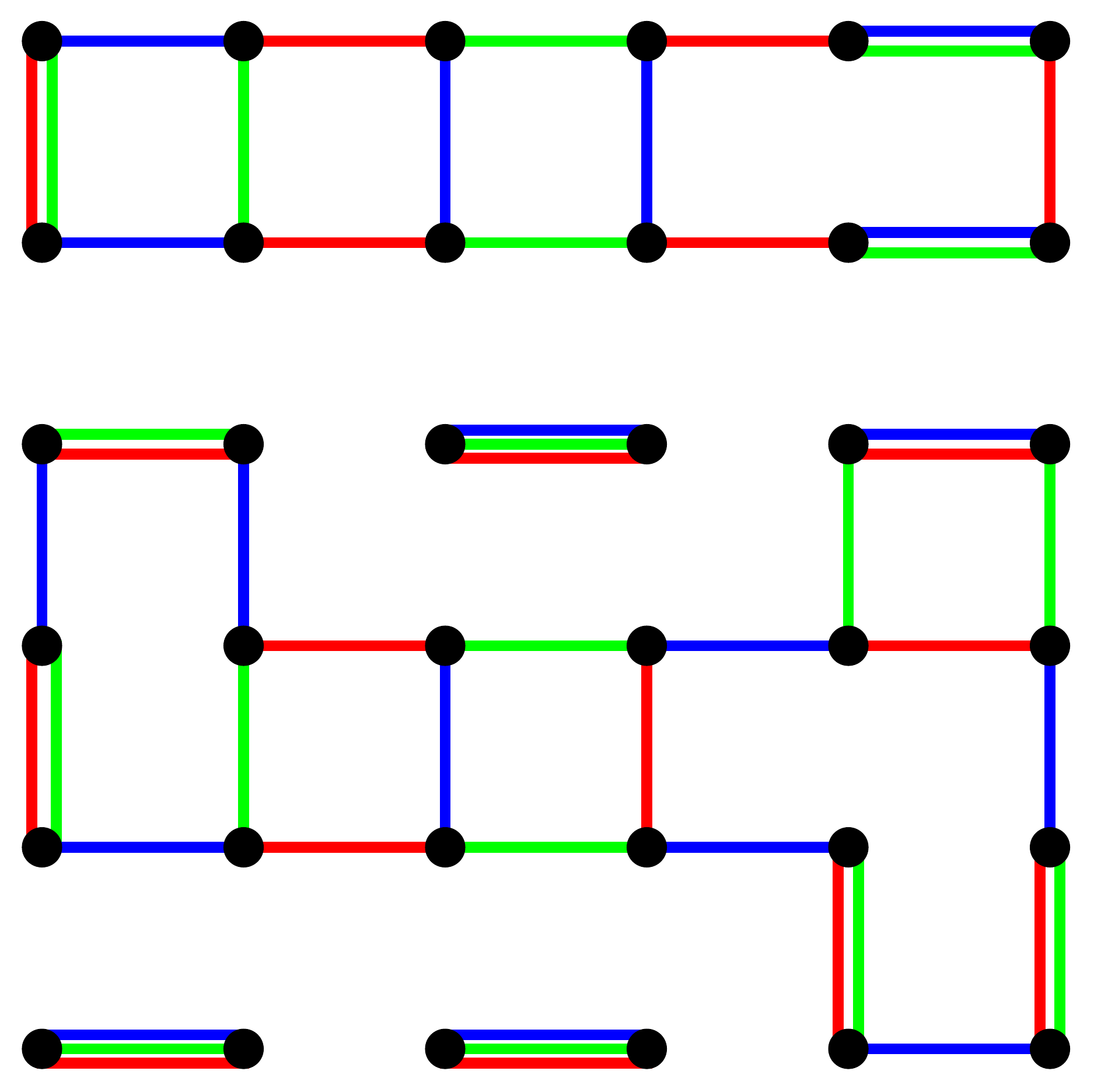}}
\caption{\label{colored3mw}An edge-$3$-coloring of the multiweb of Figure \protect{\ref{3multiweb}}.}
\end{figure}

For a planar graph $\G$, we define the \emph{partition function of $n$-multiwebs} $Z_{nd}$ (here $nd$ stands for ``$n$-dimer"),  to be 
\be\label{Zndtrivial}Z_{nd} := \sum_{m\in\Omega_n(\G)} \Tr_+(m, I).\ee
The $+$ subscript corresponds to a choice of \emph{positive cilia}, see Section \ref{planarwebscn} below and comments after 
Corollary \ref{poscilia}.
We also define $Z_d$ to be the number of single dimer covers of $\G$.

\begin{prop}\label{thm1}
$Z_{nd} = (Z_d)^n$.
\end{prop}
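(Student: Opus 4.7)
The plan is to interpret $Z_{nd}$ as a count of pairs (multiweb, coloring) and then establish a bijection with $n$-tuples of dimer covers.

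First, by the cited Proposition \ref{tracecolorplanar}, for a planar graph $\G$ and the identity connection the quantity $\Tr_+(m,I)$ is precisely the number of edge-$n$-colorings of $m$ in the sense defined in Section \ref{ssec:colorings}. Substituting this into \eqref{Zndtrivial} gives
\[
Z_{nd} \;=\; \sum_{m\in\Omega_n(\G)} \#\{\text{edge-$n$-colorings of } m\} \;=\; \#\bigl\{(m,c) : m\in\Omega_n(\G),\ c \text{ an edge-$n$-coloring of } m\bigr\}.
\]

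Next I would construct a bijection between such pairs $(m,c)$ and ordered $n$-tuples $(M_1,\ldots,M_n)$ of dimer covers of $\G$. In one direction, given $(m,c)$, for each color $i\in\CC=\{1,\ldots,n\}$ let $M_i$ be the set of edges $e$ of $\G$ with $i\in c(e)$. By definition of an edge-$n$-coloring, at each vertex $v$ the color $i$ appears exactly once among the incident edges of $m$ (where an edge of multiplicity $k$ carries $k$ distinct colors), so $M_i$ covers $v$ exactly once; hence $M_i$ is a perfect matching. In the reverse direction, given $(M_1,\ldots,M_n)$, define $m(e) = \#\{i : e\in M_i\}$ and $c(e) = \{i : e\in M_i\}$. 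The degree condition on each $M_i$ forces $\sum_{u\sim v} m_{uv} = n$ at every vertex, so $m\in\Omega_n(\G)$, and $c$ is automatically a valid edge-$n$-coloring. The two constructions are mutually inverse.

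Combining the two observations,
\[
Z_{nd} \;=\; \#\bigl\{(M_1,\ldots,M_n) : M_i \text{ a dimer cover of } \G \text{ for each } i\bigr\} \;=\; (Z_d)^n,
\]
which is the desired identity.

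The only real subtlety is the first step: one must check that the positive-cilia convention underlying $\Tr_+$ really gives the unsigned count of edge-$n$-colorings on a planar graph, rather than a signed count with nontrivial cancellation. This is exactly the content of Proposition \ref{tracecolorplanar}, which I am taking as given; once it is invoked, the remainder of the argument is the straightforward color-decomposition bijection sketched above.
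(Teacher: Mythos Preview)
Your proof is correct and follows essentially the same approach as the paper's: both arguments reduce to the observation that the map from ordered $n$-tuples of dimer covers to $n$-multiwebs has fiber over $m$ equal to the number of edge-$n$-colorings of $m$, which is $\Tr_+(m,I)$ by Proposition~\ref{tracecolorplanar}. Your version simply spells out the bijection in both directions more explicitly than the paper's one-sentence proof.
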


\begin{proof}
There is a natural map from ordered $n$-tuples of single dimer covers to $n$-multiwebs, obtained by taking the union and recording multiplicity over each edge. The fiber over a fixed $n$-fold dimer cover
is the number of its edge-$n$-colorings, which is $\Tr_+(m, I)$. 
\end{proof}

Associated to $Z_{nd}$ is a natural probability measure $\mu_n$ on $n$-multiwebs of $\G$, where a multiweb $m$ has probability
$\Pr(m) = \frac{\Tr_+(m, I)}{Z_{nd}}.$ 
One of our motivations is to analyze this measure; as a tool to probe this measure we use $\SL$-local systems and our main theorem, Theorem \ref{main}.

\subsection{The case \texorpdfstring{$n=3$}{n=3}}
In the case $n=3$, on a graph on a surface with a flat $\SLth$-connection $\Phi$,
one can use \emph{skein relations} (see Section \ref{ssec:websinsurfaces}) 
to write any $3$-multiweb as a linear combination of \emph{reduced} (or \emph{non-elliptic}) multiwebs.
These are multiwebs where each contractible face has $6$ or more sides (see precise definitions
in Section \ref{reductions}). Skein relations preserve the trace,
in the sense that the trace of the ``before" multiweb is the sum of the traces of the ``after" multiwebs.
We can thus rewrite the right-hand side of Theorem \ref{main} as a sum over isotopy classes $\lambda$ of reduced multiwebs:
$$\pm \det \tilde K(\Phi)%Z_{3d}(\Phi) 
= \sum_{\lambda\in\Lambda_3}C_\lambda\Tr(\lambda).$$  
Here $\Lambda_3$ is the set of isotopy classes of reduced $3$-multiwebs.
By a theorem of Sikora and Westbury \cite{SikoraAlgGeomTop07} the coefficients $C_\lambda$ can be in principle extracted from $\det \tilde K(\Phi)$ as $\Phi$ 
varies over flat connections. See Sections \ref{annulussection} and \ref{pantssection} for applications.

\section*{Acknowledgements}  
	
We thank Charlie Frohman, James Farre, and David Wilson for helpful discussions, and the referees for suggestions for improvements. 
This research was supported by NSF grant DMS-1940932 and the Simons Foundation grant 327929.

\section{Background}

\subsection{Graph connections}
\label{ssec:slnlocalsystems}

Let $\Phi=\{\phi_e\}_{e\in E}$ be an \textit{$\SL(\R)$-local system} on $\G$.  
This is the data of, for each (oriented) edge $e=bw$ of $\G$, a matrix $\phi_{bw}\in\SL(\R)$, with $\phi_{wb} = (\phi_{bw})^{-1}$.  

A coordinate-free definition  can be given as follows. 
Let $V$ be a $n$-dimensional vector space, fixed once and for all, and associate to each vertex $v$ of $\G$ an identical
copy 
$V_v=V$; 
we call $\bigoplus_v V_v$ a 
\emph{$V$-bundle} on $\G$. A \emph{$\mathrm{GL}(V)$-connection} on $\bigoplus_v V_v$ is a choice of isomorphisms $\phi_{bw}:V_b\to V_w$
along edges $e=bw$, with $\phi_{wb}=\phi_{bw}^{-1}$.  Similarly we can talk about an \emph{$\mathrm{SL}(V)$-connection},
because $\phi_{bw}$ is a linear map from $V$ to itself.  Choosing a basis of $V$ allows us to talk about $\GL(\R)$- and  $\SL(\R)$-connections.   (In practice, one simply takes $V=\mathbb{R}^n$ at each vertex and assigns matrices to each oriented edge.)
Thus an $\SL(\R)$-connection is the same thing as an $\SL(\R)$-local system.  

More generally, we define an \emph{$\mathrm{End(V)}$-connection}
to be the assignment of a linear map $\phi_{bw}: V_b \to V_w$ to each edge $e=bw$,
not requiring invertibility (and we do not define any linear map from $w$ to $b$).  Choosing a basis $\beta$ of $V$ allows us to talk about an \emph{$\mathrm{M}_n(\R)$-connection.}

Two $\mathrm{End}(V)$-connections $\Phi=\{\phi_e\},\Phi'=\{\phi'_e\}$ on the same graph 
are \emph{$\mathrm{GL}(V)$-gauge equivalent} (resp. \textit{$\mathrm{SL}(V)$-gauge equivalent}) if there are $A_v\in\mathrm{GL}(V)$ (resp. $A_v \in \mathrm{SL}(V)$) such that for all edges $bw$,
we have $\phi'_{bw} = A_w^{-1}\phi_{bw}A_b$.  Similarly, we can talk about $\mathrm{M}_n(\mathbb{R})$-connections being $\mathrm{GL}_n(\mathbb{R})$- or $\mathrm{SL}_n(\mathbb{R})$-gauge equivalent.

Given 
a $\mathrm{GL}(V)$-connection $\Phi$ and a closed oriented loop $\gamma$ on $\G$ with a base vertex $v\in\gamma$, the \emph{monodromy} of $\Phi$ around $\gamma$  is the composition
of the isomorphisms around $\gamma$ starting at $v$. The conjugacy class of the monodromy is well-defined independently of the base point $v$.

\subsection{Dimer model}\label{dimermodel}

For background on the dimer model see \cite{Kenyon09Statisticalmechanics}.

Let $\G=(W\cup B,E) $ be a planar bipartite graph (graph embedded in the plane)
with the same number of white vertices as black vertices: $N = |W|=|B|$. Let $\nu:E\to\R_{>0}$ 
be a positive weight function on its edges.
A dimer cover of $\G$ is a perfect matching: a collection of edges covering each vertex exactly once. A dimer cover $m$ 
has weight $\nu_m$ given by the product of its edge weights:
$\nu_m = \prod_{e\in m}\nu_e$.  Let $\Omega_1$ be the set of dimer covers and let $Z=\sum_{m\in\Omega_1}\nu_m$ be the sum of weights of all dimer covers.
Kasteleyn showed \cite{Kasteleyn61Physica} that  $Z= |\det K|$,
where the matrix $K=(K_{wb})_{w\in W,b\in B}$, the (small) Kasteleyn matrix,  satisfies
$$K_{wb} = \begin{cases} \eps_{wb} \nu_{bw}& w\sim b\\0&\text{else}.\end{cases}$$
Here the $\eps_{wb}\in\{\pm1\}$ are signs chosen using the ``Kasteleyn rule": faces of length 
$l$ have $(l/2+1)\bmod 2$ minus signs.
This condition determines $K$ uniquely up to gauge, that is, up to left- and right-multiplication by a diagonal matrix of $\pm 1$'s.
See \cite{Kenyon09Statisticalmechanics}.

Later we will embed $\G$ in a planar surface, possibly with holes. In this setting we still call ``face" a connected component of the complement of the graph. A face is \emph{punctured} if it contains one or more holes, and otherwise is \emph{contractible}.
The Kasteleyn sign rule above applies for both punctured and contractible faces. 

Note that in our definition, $K$ has rows indexing white vertices and columns indexing black vertices.
Some references define the (big) Kasteleyn matrix $K$ indexed by all vertices, in which case $Z=|\det K|^{1/2}$.

\subsection{Double dimer model}\label{ddsection}

A double-dimer configuration on $\G$ is another name for a $2$-multiweb on $\G$. This is a decomposition of the graph into a collection of vertex-disjoint (unoriented) loops 
and doubled edges. If $\Phi=\{\phi_{bw}\}_{bw\in E}$ is an $\text{SL}_2(\mathbb{R})$-connection on $\G$,
and $m\in\Omega_2(\G)$ is a $2$-multiweb, we can compute the web-trace by
$$\Tr(m) =  \prod_{\text{loops $\gamma$}} \tr(\phi_{\gamma})$$
where $\tr$ is the matrix trace of the monodromy $\phi_\gamma$, and
where the product is over loops $\gamma$ of $m$ (each with some chosen orientation). 
Doubled edges do not count as loops
and so do not contribute to the product. Note that $\text{SL}_2$ has the special property that $\tr(A)=\tr(A^{-1})$,
so $\tr(\phi_{\gamma})$ is independent of the choice of orientation of $\gamma$.

In this setting we can construct an associated
Kasteleyn matrix $K=(K_{wb})_{w\in W,b\in B}$ as for single dimer covers, but with entries $K_{wb} = \eps_{wb}\phi_{bw}\in\M_2(\R)$, where $\eps_{wb}$ are 
Kasteleyn signs as before (and $K_{wb}$ is the zero matrix if $w$ and $b$ are not adjacent).
Note that we use $\eps_{wb}\phi_{bw}$ for the $K_{wb}$ entry, and not $\eps_{wb}\phi_{wb}$. This is 
because, having chosen $K$ to have white rows and black columns (rather than the reverse),
now both $\phi_{bw}$ and $K$ are maps from functions on black vertices to functions on white vertices. 
Let $\tilde K$ be the $2|W|\times 2|B|$ matrix obtained by replacing each entry with its
$2\times 2$ block of numbers. In \cite{Kenyon14CMP} it was shown that
\be\label{detK2}\det\tilde K=\sum_{m\in\Omega_2(\G)}\Tr(m).\ee

The main result of the present paper is to generalize this to $n\ge 3$. However even in the case $n=2$ our main theorem 
generalizes (\ref{detK2}) to $\M_2(\R)$-connections, where monodromies are no longer defined.

\section{Web-traces}
\label{sec:webtraces}

\subsection{Trace for proper multiwebs via tensor networks}
\label{ssec:defpropermultiwebs}

Let $\G$ be a bipartite graph, and 
let $\Phi=\{\phi_{bw}\}$ be an $\mathrm{End}(V)$-connection on $\G$ 
(or an $M_n(\R)$-connection, for $V=\mathbb{R}^n$, namely the assignment of an $n\times n$ matrix $\phi_{bw}$ 
to each edge of $\G$, not requiring invertibility).
We choose for each vertex $v$ a linear order of the half-edges at $v$. Denote by $L=L(\G)$ the data of these linear orderings.

A \emph{proper $n$-multiweb} in $\G$ is a $n$-multiweb $m\in\Omega_n(\G)$ whose multiplicities are all equal to $0$ or $1$, or in other words an $n$-valent spanning subgraph of $\G$.
Let $m$ be a proper $n$-multiweb in $\G$.
We consider its edges to be oriented from black to white.
The trace of $m$ is a scalar function of $\Phi$ associated to $m$, 
defined as follows  (the definition of trace for a general multiweb is given
in Section \ref{multiplicity} below).
This is a standard tensor network definition; see for example \cite{SikoraTrans01}.  

The data $L$ for $\G$ restricts to give a linear ordering of the $n$ half-edges of $m$ at each vertex, which by abuse of notation we also call $L=L(m)$.
Let $x_1,x_2,\dots,x_n$ be a basis of $V$, and $y_1,y_2,\dots,y_n$ the corresponding dual basis of the dual vector space $V^*$.
We associate identical copies $V_e=V$ and $V_e^*=V^*$ to each edge $e=bw$ of $m$.
Here $V_e$ is associated to the half-edge incident to $b$, and $V_e^*$ to the half-edge incident to $w$.
At each black vertex with adjacent vector spaces $V_1,V_2, \dots, V_n$ in the given linear order $L$
we associate a certain vector $v_b\in V_1\otimes \dots \otimes V_n$:
$$v_b = \sum_{\sigma\in S_n} (-1)^{\sigma} x_{\sigma(1)}\otimes x_{\sigma(2)} \otimes\dots\otimes x_{\sigma(n)},$$
called the \emph{codeterminant}.  
Likewise at each white vertex with adjacent vector spaces $V_1^*,\dots,V_n^*$ in order
we associate a dual codeterminant $u_w\in V_1^*\otimes \dots\otimes V_n^*$:
$$u_w = \sum_{\sigma\in S_n} (-1)^{\sigma} y_{\sigma(1)}\otimes y_{\sigma(2)} \otimes\dots\otimes y_{\sigma(n)}.$$

%Note that changing the cilium at a black (resp. white) vertex by one ``notch" will change the sign of $v_b$ (resp. $v_w$) if $n$ is even, but not if $n$ is odd.

Now along each edge $e=bw$ we have a contraction of tensors using $\phi_{bw}$. 
That is, we take the tensor product 
$$\bigotimes_{b} v_b \in \bigotimes_{e} V_e$$ 
of $v_b$ over all black vertices, and the tensor product
$$\bigotimes_{w} u_w \in \bigotimes_e V^*_e$$ of $u_w$ over all white vertices.
Then we contract component by component along edges:
a simple tensor
$\bigotimes_{e} x_e \in \bigotimes_{e} V_e$ and a simple tensor $\bigotimes_{e} y_e\in \bigotimes_e V^*_e$ contract to give 
$\prod_{e=bw} y_e(\phi_{bw}x_e),$
or, in a more symmetric notation $\prod_{e=bw}\langle y_e|\phi_{bw}|x_e\rangle$.
Summing we have
$$\Tr_L(m, \Phi) :=  \left\langle \bigotimes_{w} u_w \Big|\bigotimes_{e=bw}\phi_{bw}\Big|\bigotimes_bv_b\right\rangle.$$

The above definition of trace $\Tr_L(m,\Phi)=\Tr_L(m,\Phi)(x_i)$ was calculated in terms of the basis $\{x_i\}$ for $V$, but in fact does not depend on this choice.
That is, if $\{ x_i^\prime\}$ is another basis of $V$, then $\Tr_L(m,\Phi)(x_i)=\Tr_L(m,\Phi)(x_i^\prime)$.
 Indeed, under a base change, the codeterminants $v_b\in V_1\otimes \dots \otimes V_n$ multiply by the determinant
of the base change matrix, and the dual codeterminants $u_w\in V_1^*\otimes \dots\otimes V_n^*$ multiply by the inverse of this determinant.

\subsection{Definition of trace for general multiwebs}\label{multiplicity}
\label{ssec:multiwebtraces}

Equip $\G$ with a linear order $L=L(\G)$ as in Section \ref{ssec:defpropermultiwebs}.

Now let $m\in\Omega_n(\G)$ be a general multiweb in $\G$; see Section \ref{ssec:multiwebs}.  
The data $L$ restricts to a linear order $L=L(m)$ of the (possibly fewer than $n$) half-edges of $m$ incident to each vertex.  
We define the trace of $m$ with respect to an $\mathrm{End}(V)$-connection $\Phi$ of $\G$ as follows. Split each edge $e$ of $\G$ of multiplicity $m_e>1$ into $m_e$ parallel edges (edges with the same endpoints); let $\G'$ be the resulting
 graph. If the original edge $e=bw$ has parallel transport $\phi_{bw}$, put $\phi_{bw}$ on each of the new split edges; call the new connection on $\G^\prime$ by $\Phi^\prime$.
The multiweb $m$ in $\G$ %defines 
determines a proper multiweb $m^\prime\in\Omega_n(\G^\prime)$, where an edge of multiplicity $m_e$ of $m$ 
 becomes $m_e$ parallel edges of multiplicity $1$ in $m'$.  
 
Refine the ordering data $L$ of $m$ to an ordering data $L^\prime$ of $m^\prime$ by choosing at each black vertex an arbitrary linear order for the new split half-edges, while still respecting the overall $L$ order at that vertex;
this %induces
determines a corresponding ordering of the split half-edges at the adjacent white vertex.
We define 
\begin{equation}\label{mwdef}\Tr_L(m, \Phi) := \frac{\Tr_{L^\prime} (m^\prime, \Phi^\prime)}{\prod_e \left( m_e! \right)}\end{equation}
where the trace $\mathrm{Tr}_{L^\prime}(m^\prime, \Phi^\prime)$ of the proper $n$-multiweb $m^\prime$ is defined as in Section~\ref{ssec:defpropermultiwebs} above.  Notice the trace $\Tr_L(m, \Phi)$ is independent of the choice of linear order $L'$ refining $L$:
any other such linear order $L^\prime $ will change each codeterminant and adjacent dual codeterminant by the same signature.

Proposition \ref{tracecolor0} below justifies this definition.

\subsection{\texorpdfstring{$\mathrm{SL}_n$}{SLn} gauge equivalence}

Recall the notion of two 
$\mathrm{End}(V)$-connections $\Phi$, $\Phi^\prime$ being 
$\mathrm{GL}(V)$- or 
$\mathrm{SL}(V)$-gauge equivalent; see Section \ref{ssec:slnlocalsystems}.  One can check 
that $\Tr_L(m,\Phi)\neq\Tr_L(m,\Phi^\prime)$ in general if $\Phi$ and $\Phi^\prime$ are only 
$\mathrm{GL}(V)$-gauge equivalent.  However, $\mathrm{SL}(V)$ gauge equivalence preserves the trace:

\begin{prop}\label{indep} The trace of an $n$-multiweb $m\in\Omega_n(\G)$ is 
$\mathrm{SL}(V)$-gauge invariant.  That is, $\Tr_L(M, \Phi)=\Tr_L(M, \Phi^\prime)$ if the 
$\mathrm{End}(V)$-connections $\Phi$ and $\Phi^\prime$ are 
$\mathrm{SL}(V)$-gauge equivalent.
\end{prop}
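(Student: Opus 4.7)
The plan is to verify the statement for proper $n$-multiwebs using the tensor network definition from Section~\ref{ssec:defpropermultiwebs}, and then reduce the general case to the proper case via equation~(\ref{mwdef}).

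First, I would isolate the single multilinear-algebra identity on which everything rests: for any $A \in \mathrm{GL}(V)$ and any basis $\{x_1,\dots,x_n\}$ of $V$,
\[
\sum_{\sigma \in S_n} (-1)^\sigma (A x_{\sigma(1)}) \otimes \cdots \otimes (A x_{\sigma(n)}) \;=\; \det(A)\,\sum_{\sigma \in S_n} (-1)^\sigma x_{\sigma(1)} \otimes \cdots \otimes x_{\sigma(n)},
\]
that is, $A^{\otimes n}\, v_b = \det(A)\, v_b$. The dual statement reads $(A^{-1})^{*\,\otimes n} u_w = \det(A^{-1})\, u_w$, where $(A^{-1})^*$ denotes the dual endomorphism of $V^*$. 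This is exactly the defining feature that distinguishes $\mathrm{SL}(V)$ from $\mathrm{GL}(V)$ for our purposes.

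Next, suppose $m$ is a proper $n$-multiweb and $\Phi' = \{A_w^{-1} \phi_{bw} A_b\}$ is an $\mathrm{SL}(V)$-gauge transform of $\Phi$. I would rewrite the tensor contraction on each oriented edge $e = bw$ as
\[
\bigl\langle y_e \bigm| A_w^{-1}\, \phi_{bw}\, A_b \bigm| x_e \bigr\rangle \;=\; \bigl\langle (A_w^{-1})^* y_e \bigm| \phi_{bw} \bigm| A_b x_e \bigr\rangle,
\]
and regroup the result so that each $A_b$ acts on the half-edge vector coming from $v_b$, and each $(A_w^{-1})^*$ acts on the half-edge covector coming from $u_w$. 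Because $m$ is proper and $n$-valent at each vertex, every codeterminant $v_b$ is acted on by exactly $A_b^{\otimes n}$ and every dual codeterminant $u_w$ by exactly $(A_w^{-1})^{*\,\otimes n}$. Applying the identity above vertex by vertex yields a global factor $\prod_b \det(A_b) \cdot \prod_w \det(A_w^{-1})$, which equals $1$ since each $A_v \in \mathrm{SL}(V)$. Hence $\Tr_L(m,\Phi') = \Tr_L(m,\Phi)$.

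Finally, for a general multiweb $m$, I would appeal to definition~(\ref{mwdef}): passing to the split graph $\mathcal{G}'$ with the refined connection $\Phi'{}'$ (copying $\phi_{bw}$ onto each parallel edge), an $\mathrm{SL}(V)$-gauge transformation $\{A_v\}$ on $\mathcal{G}$ pulls back to the same data $\{A_v\}$ on $\mathcal{G}'$, and the associated proper multiweb $m'$ has its trace preserved by the previous step; dividing by the combinatorial factor $\prod_e(m_e!)$ gives $\Tr_L(m,\Phi) = \Tr_L(m,\Phi')$. The only step requiring genuine care is the bookkeeping in the middle paragraph—tracking how the $A_b$'s and $(A_w^{-1})^*$'s get distributed across the half-edges and correctly invoking duality—but once that is set up, the determinant identity finishes everything cleanly.
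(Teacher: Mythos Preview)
Your proof is correct and follows essentially the same approach as the paper: the core identity $A^{\otimes n}v_b=\det(A)\,v_b$ (so that $\mathrm{SL}(V)$ preserves codeterminants) is exactly what the paper invokes, though the paper states it in a single sentence and leaves the edge-regrouping and the reduction to the proper case implicit. Your version spells out the bookkeeping more carefully, but the underlying argument is the same.
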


\begin{proof}
An $\SL$ change of basis at a single black vertex $b$ preserves the codeterminant $v_b$. Likewise at a single white vertex.
\end{proof}

\subsection{Traces in terms of edge colorings}

The trace of a multiweb $m$ can be given a more combinatorial definition as follows.
At each vertex $v$, choose a coloring of the half-edges of $m$ at $v$ with $n$ colors $\CC=\{1,\dots,n\}$, using each color once,
and so that  
half-edges with multiplicity get a subset of colors. In other words at a vertex $v$ with edge multiplicities $m_1,m_2,\dots,m_k$, 
we partition $\CC$ into $k$ disjoint 
subsets of sizes $m_1,m_2,\dots,m_k$, one for each half-edge at $v$.
Note that there are $\binom{n}{m_1,m_2,\dots,m_k}$ possible such colorings at $v$.  

To such a coloring at $v$ we associate a sign $c_v$, defined as follows.  
Let the color set $\CC$ be equipped with the natural order, and let $L$ be the linear order of half-edges at $v$ as in
Section \ref{ssec:multiwebtraces}.  List the colors at $v$ according to the linear order $L$ where 
for edges with multiplicity $>1$ the colors are listed in their natural order. Then this list is a permutation of $\CC$ and $c_v$ is its signature.

A \emph{coloring} $c$ of the multiweb $m$ is the data of a coloring of the half-edges at each vertex $v$. Given a coloring of $m$, on each edge $e$ of multiplicity $m_e$ there are two sets of colors, each of size $m_e$, 
one associated with the black vertex and one with the white vertex.
We associate a corresponding matrix element to this edge $e$, using the bijection of colors with indices: 
if $e=bw$ has parallel transport $\phi_{bw} \in \mathrm{M}_n(\mathbb{R})$, and is colored with set $S_e$ at the white vertex and $T_e$ at the black vertex, then the corresponding matrix element is $\det (\phi_{bw})_{S_e,T_e}$, that is, the determinant of the $S_e,T_e$-minor of $\phi_{bw}$ (the submatrix of $\phi_{bw}$ with rows $S_e$ and columns $T_e$).  Recall that the rows (resp. columns) of $\phi_{bw}$ are indexed by the colors at $w$ (resp. $b$). 

Now to each coloring $c$ of $m$ we take the product of the associated matrix elements over all edges, and multiply this by the sign
$\prod_v c_v$. Then this quantity is summed
over all possible colorings to define the trace:

\begin{prop}\label{tracecolor0} The above procedure computes the trace of the $n$-multiweb $m$, that is,
\be\label{trline0}\Tr_L(m, \Phi) = \sum_{\textnormal{colorings $c$}} \prod_v c_v\prod_{e=bw}\det(\phi_{bw})_{S_e,T_e}.\ee  
\end{prop}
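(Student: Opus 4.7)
The plan is to establish the formula first for proper multiwebs by a direct tensor-network expansion, and then bootstrap to the general case using the reduction via the proper multiweb $m'$ on $\G'$.

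First I would treat the proper case, where every $m_e = 1$. Expanding each codeterminant $v_b = \sum_\sigma (-1)^\sigma x_{\sigma(1)} \otimes \cdots \otimes x_{\sigma(n)}$ and each dual codeterminant $u_w$ as sums indexed by permutations $S_n$, the whole contraction becomes a sum indexed by a pair $(\sigma_b)_{b \in B}, (\tau_w)_{w \in W}$ of permutations, one at each vertex. Such a choice is exactly a coloring $c$ of the half-edges at each vertex, where at $b$ the $i$-th half-edge in $L$-order gets color $\sigma_b(i)$, and similarly at $w$. At each edge $e=bw$ the contraction $\langle y_{\tau_w(j)} \mid \phi_{bw} \mid x_{\sigma_b(i)} \rangle$ produces the single matrix entry $(\phi_{bw})_{\tau_w(j), \sigma_b(i)}$, which is the $1\times 1$ determinant $\det(\phi_{bw})_{S_e, T_e}$. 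The sign $(-1)^{\sigma_b}$ coming out of $v_b$ equals $c_b$ by definition (since, when all multiplicities are $1$, $c_b$ is exactly the signature of the permutation listing the colors in $L$-order), and likewise $(-1)^{\tau_w}$ equals $c_w$. This gives (\ref{trline0}) for proper multiwebs.

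Next I would handle the general case by invoking the definition $\Tr_L(m, \Phi) = \Tr_{L'}(m', \Phi')/\prod_e (m_e!)$ and applying the proper case to $m'$. A coloring $c'$ of $m'$ assigns a single color to each half-edge of each split edge; grouping these across the $m_e$ split edges of each original edge $e$ recovers a coloring $c$ of $m$ (as a color-subset $S_e$ at the white end and $T_e$ at the black end). The refinement data determining $c'$ given $c$ consists of, for each edge $e$, a pair of bijections: $\tau_e$ between the split half-edges at $b$ and $T_e$, and $\sigma_e$ between the split half-edges at $w$ and $S_e$. Both sets of bijections are parameterized by $S_{m_e}\times S_{m_e}$, contributing $(m_e!)^2$ refinements per edge.

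The main technical step is the accounting of signs and the appearance of a determinant. I would check that the sign factor refines as
\[
\prod_v c'_v \;=\; \prod_v c_v \cdot \prod_e \mathrm{sgn}(\sigma_e)\,\mathrm{sgn}(\tau_e),
\]
because within each block of split half-edges the $L'$-order versus the natural-color order of $S_e$ (resp.\ $T_e$) differs by $\sigma_e$ (resp.\ $\tau_e$). Then, for a fixed coloring $c$ of $m$, summing over refinements at a single edge $e$ gives
\[
\sum_{\sigma_e,\tau_e\in S_{m_e}} \mathrm{sgn}(\sigma_e)\,\mathrm{sgn}(\tau_e) \prod_{j=1}^{m_e} (\phi_{bw})_{\sigma_e(j),\,\tau_e(j)} \;=\; m_e!\,\det (\phi_{bw})_{S_e,T_e},
\]
which is the standard identity (after a change of variable $\sigma_e \mapsto \sigma_e\circ \tau_e^{-1}$, one of the signs squares to $1$, a $\tau_e$-sum of size $m_e!$ survives, and the $\sigma_e$-sum is the Leibniz expansion of the minor determinant). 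Multiplying these identities over all edges produces a factor of $\prod_e m_e!$, which cancels exactly the normalization in (\ref{mwdef}), yielding (\ref{trline0}).

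The main obstacle is precisely the sign bookkeeping in the bulleted identity above — one must verify that the refinement-signs coming from $c'_v$ at both endpoints of $e$ combine cleanly with the Leibniz expansion to produce the determinant of the $S_e,T_e$-minor, rather than (say) its permanent. The independence of the final answer from the choice of refinement $L'$, noted after (\ref{mwdef}), is what ultimately makes this sign-cancellation well-defined.
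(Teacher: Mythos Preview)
Your proposal is correct and follows essentially the same two-step approach as the paper: first verify the formula for proper multiwebs by expanding the codeterminants term-by-term, then reduce the general case to the proper case via the split graph $m'$ and the normalization~(\ref{mwdef}). The paper's proof is terser in the second step---it simply asserts that the $(k!)^2$ pairs of bijections each give a term of $\det(\phi_{bw})_{S_e,T_e}$, with each term appearing $k!$ times and ``moreover the signs agree''---whereas you spell out the sign factorization $\prod_v c'_v = \prod_v c_v \cdot \prod_e \mathrm{sgn}(\sigma_e)\mathrm{sgn}(\tau_e)$ and the change-of-variable identity that produces $m_e!\det(\phi_{bw})_{S_e,T_e}$ explicitly.
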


\begin{proof}
If $m$ is a proper multiweb, that is, if all edges have multiplicity $1$, then a coloring of the half-edges at $b$ corresponds to a single term
in $v_b$, the codeterminant, and its sign is the coefficient in front of this term. So a coloring of all the half-edges at all vertices
corresponds to a single term in the expansion of 
\be\label{trdef}\Tr_L(m) = \langle \bigotimes_wu_w |\bigotimes_{e=bw}\phi_{bw}|\bigotimes_{b} v_b\rangle\ee
when we expand $ \otimes_{b} v_b$ and $ \otimes_{w}u_w$ over all permutations.
Thus the two formulas (\ref{trdef}) and (\ref{trline0}) agree for proper multiwebs.

Now suppose $m$ is a multiweb, obtained from a proper multiweb $m'$ by collapsing $k$ parallel edges, each with parallel transport $\phi_{bw}$, into a single
edge  $e=bw$ of multiplicity $k$, with parallel transport $\phi_{bw}$. 
In $m$ a coloring assigns subsets $S_e,T_e\subset\CC$ of size $k$ to $e$, and the contribution from this edge is $\det (\phi_{bw})_{S_e,T_e}$ for the multiweb $m$.
The corresponding contribution in $\Tr_L(m')$ 
from this set of colorings involves all possible ways of distributing the colors $S_e$ to the $k$ half edges at $w$, and likewise for $T_e$.
There are $k!$ bijections of $S_e$ with $e_1,\dots,e_k$, and $k!$ bijections of $T_e$ with $e_1,\dots,e_k$. 
Each such choice is a term in $\det (\phi_{bw})_{S_e,T_e}$, and there are $k!$ choices corresponding to each term; moreover the signs agree. Thus the Proposition holds for $m$, using the definition
$\Tr_L(m') = k!\Tr_L(m)$ from (\ref{mwdef}). Splitting any other multiple edges, we argue analogously. This completes the proof.
\end{proof}

If the connection is the \emph{identity connection}, we have a nonzero contribution to the trace only when for each edge $e$,  $S_e=T_e$, that is,
the subsets of colors for the two half-edges are equal. Such a coloring of $m$ is called an \emph{edge-$n$-coloring}.  In this case the matrix element is exactly $1$ for each edge.
The trace is thus the signed number of edge-$n$-colorings,
where the sign is $\prod_v c_v$. 

\begin{prop}\label{tracecolor}
The trace $\Tr_L(m,I)$ of an $n$-multiweb for the identity connection is the signed number of its edge-$n$-colorings.
\end{prop}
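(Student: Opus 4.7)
The plan is to directly specialize Proposition \ref{tracecolor0} to the case $\Phi = I$, so that the parallel transport $\phi_{bw}$ along every edge is the identity matrix. The formula (\ref{trline0}) then reads
$$\Tr_L(m, I) = \sum_{\text{colorings } c} \prod_v c_v \prod_{e=bw} \det(I)_{S_e, T_e},$$
and the entire content of the proposition will come from identifying which colorings contribute nonzero terms.

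First I would observe the elementary linear algebra fact that for subsets $S, T \subseteq \mathcal{C}$ of the same size $k$, the minor $\det(I)_{S,T}$ of the $n \times n$ identity matrix equals $1$ if $S = T$ and $0$ otherwise. Indeed, if $S = T$ the submatrix is itself a $k \times k$ identity, while if there exists $i \in S \setminus T$, the row of the submatrix indexed by $i$ consists entirely of zeros (since $I_{ij} = 0$ for all $j \in T$), forcing the determinant to vanish.

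Next I would note that, by definition, a coloring $c$ of $m$ is an edge-$n$-coloring precisely when $S_e = T_e$ for every edge $e$. Combining this with the previous paragraph, the only colorings contributing to the sum are edge-$n$-colorings, and at each such coloring the product $\prod_e \det(I)_{S_e, T_e}$ equals $1$. Therefore
$$\Tr_L(m, I) = \sum_{\text{edge-}n\text{-colorings } c} \prod_v c_v,$$
which is the signed count of edge-$n$-colorings, as claimed.

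There is no real obstacle here: the work has already been done in Proposition \ref{tracecolor0}, and the current statement is essentially the observation that the identity connection acts as a Kronecker delta on color subsets at each edge. If anything, the only point requiring a moment of care is ensuring that the sign $\prod_v c_v$ is the same quantity that appears after specialization — but this is immediate from the definition of $c_v$, which depends only on the vertex-side coloring data and the linear order $L$, not on the connection.
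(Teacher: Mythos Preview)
Your proposal is correct and is essentially identical to the paper's own argument: the paper states Proposition~\ref{tracecolor} as an immediate consequence of Proposition~\ref{tracecolor0}, observing in the paragraph just before it that for the identity connection the minor $\det(I)_{S_e,T_e}$ vanishes unless $S_e=T_e$ and equals $1$ in that case, so only edge-$n$-colorings contribute and each contributes $\prod_v c_v$.
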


A similar proposition holds for diagonal connections (where a diagonal matrix is assigned to each edge), where now the trace is a weighted, signed number of edge-$n$-colorings.

\subsection{Graphs in surfaces}
\label{ssec:graphsinsurfaces}

Let $\G$ be a bipartite graph embedded in an oriented surface $\Sigma$.
The half-edges of $\G$ incident to a vertex $v$ inherit a cyclic ordering from the orientation of $\Sigma$.  Our convention is to choose the counterclockwise (resp. clockwise) orientation when $v$ is a black (resp. white) vertex.  This cyclic ordering can be upgraded to a linear ordering by choosing a ``starting'' half-edge at $v$; in pictures, we indicate this choice of preferred half-edge by drawing a \emph{cilium} emanating from the vertex; see Figure \ref{ciliafig} and \cite{FockAmerMathSocTransl}. We denote by $L=L(\G)$ the data of a choice of cilia at the vertices.  

We suppose $\G$ is equipped with an $\mathrm{End}(V)$-connection $\Phi$. 
Now for a multiweb $m\in\Omega_n(\G)$,
the trace $\Tr_L(m,\Phi)$ is defined as in Section \ref{ssec:multiwebtraces}.

\begin{figure}
\begin{center}\includegraphics[width=2in]{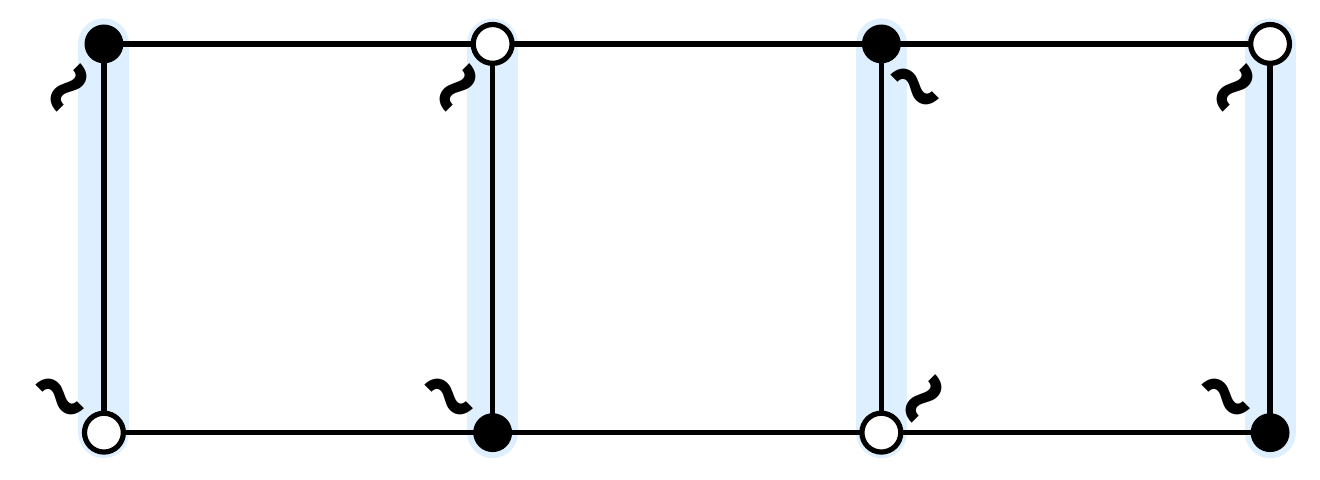}
\end{center}
\caption{\label{ciliafig} Graph $\G$ with ``positive'' cilia $L$ (see Corollary \ref{poscilia}).}
\end{figure}

\begin{prop}\label{click}
	If two cilia data $L$ and $L^\prime$ differ by rotating a single cilium by one ``click'', then \begin{equation*}\Tr_L(m,\Phi)=\pm \Tr_{L^\prime}(m,\Phi)\end{equation*}
for all $m \in \Omega_n(\G)$, where the sign is $+$ if $n$ is odd and $(-1)^{m_e}$ if $n$ is even, where $e$ is the edge of $\G$ crossed upon rotation of the cilium.
\end{prop}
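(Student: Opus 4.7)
The plan is to trace through the definition of $\Tr_L(m,\Phi)$ from Section \ref{multiplicity} and see how a one-click cilium rotation at a vertex $v$ permutes the tensor slots of the (dual) codeterminant at $v$; the sign then falls out of the antisymmetry of the (dual) codeterminant. Throughout, I work with the split graph $\G'$ and a refined ordering $L'$ of $L$ as in the definition.

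First I would set up notation. Say the cilium rotation at $v$ crosses the edge $e$ (of multiplicity $m_e$). Around $v$, list the half-edges of $m$ in $L$-order as $(h_1,h_2,\dots,h_k)$ with multiplicities $(m_1,\dots,m_k)$, and without loss of generality let $h_1$ be the half-edge of $e$ so that $m_1=m_e$. After rotation, the new $L$-order at $v$ is $(h_2,\dots,h_k,h_1)$. I refine $L$ to $L'$ by choosing, at each black vertex, a linear order of each group of split half-edges that I then keep fixed; by the definition in Section \ref{multiplicity} this determines the refined ordering at adjacent white vertices. For the rotated cilium, I choose the refinement $(L^{\mathrm{new}})'$ so that the same internal orderings within each group are reused. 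The net effect on the $n$ split half-edges at $v$ is that the first $m_e$ positions (the split half-edges of $h_1$) move to the last $m_e$ positions, while the relative order within each group is preserved: exactly a cyclic shift of $n$ elements by $m_e$ positions. Crucially, at every other vertex $u\neq v$ the refined ordering is unchanged, whether $u$ is black (its cilium has not moved and the same internal choices are reused) or white (its cilium has not moved and its inherited internal orderings come from unchanged black vertices).

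Next I would translate this into a sign. A cyclic shift of $n$ elements by one position is an $n$-cycle of sign $(-1)^{n-1}$, so a shift by $m_e$ positions has sign $(-1)^{m_e(n-1)}$. By the antisymmetry of $v_b$ (if $v$ is black) or $u_w$ (if $v$ is white) under reordering of its tensor factors, the (dual) codeterminant at $v$ with respect to $(L^{\mathrm{new}})'$ equals $(-1)^{m_e(n-1)}$ times the one with respect to $L'$. Every edge contraction and every other (dual) codeterminant is unaffected, so
\[
\Tr_{L'}(m',\Phi') = (-1)^{m_e(n-1)}\,\Tr_{(L^{\mathrm{new}})'}(m',\Phi').
\]
Dividing both sides by $\prod_e(m_e!)$ and applying (\ref{mwdef}) transfers the identity to $\Tr_L(m,\Phi)$ and $\Tr_{L^{\mathrm{new}}}(m,\Phi)$. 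Simplifying: if $n$ is odd then $n-1$ is even and the sign is $+1$; if $n$ is even then $n-1$ is odd and the sign is $(-1)^{m_e}$, matching the claim.

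I don't expect a significant obstacle here: the only point that requires care is verifying that the refinements $L'$ and $(L^{\mathrm{new}})'$ can indeed be chosen to agree outside the single cyclic shift at $v$, and this is straightforward in both the black and white cases because every cilium other than the one at $v$ is held fixed. Once this compatibility is in place, the proof reduces to a one-line application of the antisymmetry of the codeterminant.
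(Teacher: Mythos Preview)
Your argument is correct and is essentially the same as the paper's: the paper's one-line proof just cites Proposition~\ref{tracecolor0}, where the dependence on $L$ is isolated in the signs $c_v$, and a one-click rotation at $v$ multiplies every $c_v$ by the signature of the cyclic shift by $m_e$ positions; you compute this same signature directly from the antisymmetry of the (dual) codeterminant before passing to the coloring formula. Your care in checking that the refined orderings $L'$ and $(L^{\mathrm{new}})'$ can be made to agree away from $v$ is the only nontrivial point, and you handle it correctly.
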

\begin{proof}
	Proposition \ref{tracecolor0}.  
\end{proof}

It follows from the proposition that web-traces are independent of the choice of cilia for $n$ odd.

\subsection{Planar graphs}\label{planarwebscn}

Let $\G$ be embedded in an oriented \emph{planar}  surface $\Sigma$, that is, in a genus-$0$ surface minus $k$ disjoint closed disks, $k \geq 0$.  Equip $\G$ with a choice of cilia $L$, as described in the previous section.  

Proposition \ref{tracecolor} simplifies for planar multiwebs:
\begin{prop}\label{tracecolorplanar}
The trace $\Tr_L(m,I)$ of a planar $n$-multiweb $m\in\Omega_n(\G)$ for the identity connection is equal to $\pm1$ times the number of edge-$n$-colorings (with sign $+$ if $n$ is odd).  This is also nonzero, by Section \ref{finding} below.
\end{prop}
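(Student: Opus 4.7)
My plan starts from Proposition~\ref{tracecolor}, which writes $\Tr_L(m,I) = \sum_c \prod_v c_v$ as a signed sum over edge-$n$-colorings $c$ of $m$; it suffices to show that all these signs agree, and that for $n$ odd the common sign is $+1$. I first reduce to the case where $m$ is a proper multiweb (all multiplicities $1$). By the splitting construction of Section~\ref{multiplicity}, a general $m$ lifts to a proper multiweb $m'$ on a graph $\G'$ obtained by splitting each edge of multiplicity $m_e$ into $m_e$ parallel edges; each edge-$n$-coloring of $m$ lifts to exactly $\prod_e m_e!$ colorings of $m'$, and by (\ref{mwdef}) the trace satisfies $\Tr_L(m,I) = \Tr_{L'}(m',I) / \prod_e m_e!$, so the sign behavior and the coloring count scale compatibly.

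For a proper planar multiweb $m$, the main tool is the bichromatic cycle swap. Given two colors $i \neq j$, the subgraph $M_i \cup M_j$ is $2$-regular and so decomposes into disjoint cycles; swapping the colors $i,j$ along any such cycle $C$ produces another edge-$n$-coloring of $m$. At each of the $|C|$ vertices of $C$, the local permutation underlying $c_v$ is altered by a transposition of two entries, flipping $c_v$; since $\G$ is bipartite, $|C|$ is even, and the total sign change is $(-1)^{|C|} = +1$. Hence bichromatic swaps preserve $\prod_v c_v$. Next I would invoke planarity to argue that any two edge-$n$-colorings of $m$ are connected by a sequence of such swaps, concluding that all colorings of $m$ contribute the same sign.

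For $n$ odd, Proposition~\ref{click} says the trace is cilia-independent, so the common sign depends only on $m$ and not on $L$. I would pin it down as $+1$ by computing $\prod_v c_v$ for a single explicit coloring with a conveniently placed cilia and checking directly that the product equals $+1$; by the previous step, this determines the sign for all colorings. Nonvanishing then follows from the existence of at least one edge-$n$-coloring, guaranteed by Section~\ref{finding}.

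The main obstacle is establishing Kempe-connectedness of edge-$n$-colorings for a planar bipartite $n$-regular graph. Planarity is essential here: the claim can fail otherwise, e.g.\ for $K_{3,3}$ with the full edge set as a $3$-multiweb, where two of the twelve Latin-square colorings have opposite signatures under any cilia choice and hence must lie in distinct Kempe orbits. I anticipate an inductive argument on faces, exploiting that the cycle space of a planar graph is generated by face boundaries to reduce an arbitrary coloring to a canonical one via local face-bounded swaps.
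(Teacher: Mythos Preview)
Your sign-preservation computation for a bichromatic cycle swap is correct, and the reduction to proper multiwebs is fine. But the proof has a genuine gap exactly where you flag it: you never establish Kempe-connectedness of edge-$n$-colorings for a planar bipartite $n$-regular graph, and your ``anticipated inductive argument on faces'' is not a proof. Your own $K_{3,3}$ example shows this connectedness is not automatic for bipartite $n$-regular graphs, so planarity must be used in an essential way; the sketch you give does not do this. There is a second, related gap: even granting Kempe-connectedness, your argument only shows that the sign is constant across colorings of a \emph{fixed} proper $m$. To conclude that the common sign is $+1$ for $n$ odd you still need to identify that constant, and ``computing $\prod_v c_v$ for a single explicit coloring'' is not a method unless you can reduce to a multiweb where the answer is obvious---but your framework keeps $m$ fixed, so you cannot reach, say, the all-$n$-fold-edge multiweb.

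The paper's proof avoids both problems by working one color at a time and allowing the multiweb to change. Fixing color $1$, the edges of that color form a dimer cover of $\G$; by Thurston's theorem, face moves connect any two dimer covers of a planar bipartite graph. The paper checks that a single face move on the color-$1$ dimer cover (keeping the other colors fixed, hence landing on a coloring of a \emph{different} multiweb $m'$) preserves $\prod_v c_v$ when $n$ is odd, and for $n$ even changes it by a controlled sign depending on cilia positions. Repeating for each color transports any coloring of any $m$ to the coloring of the all-$n$-fold-edge multiweb, where the sign is visibly $+1$. Thus the paper's argument uses a known connectivity result (Thurston) rather than an unproved Kempe-connectedness statement, and simultaneously handles the determination of the global sign by moving across multiwebs.
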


\begin{proof}  
First note that given an edge-$n$-coloring $C$ of a multiweb $m$, the set of edges containing color $i$ is a dimer cover of $m$:
each vertex has exactly one adjacent edge containing color $i$. We will use the fact (originally due to Thurston \cite{Thurston90AmerMathMonthly}) 
that two dimer covers on a planar bipartite graph can be connected by a sequence of ``face moves" as in Figure \ref{facemove}.
\begin{figure}
\begin{center}\includegraphics[width=3in]{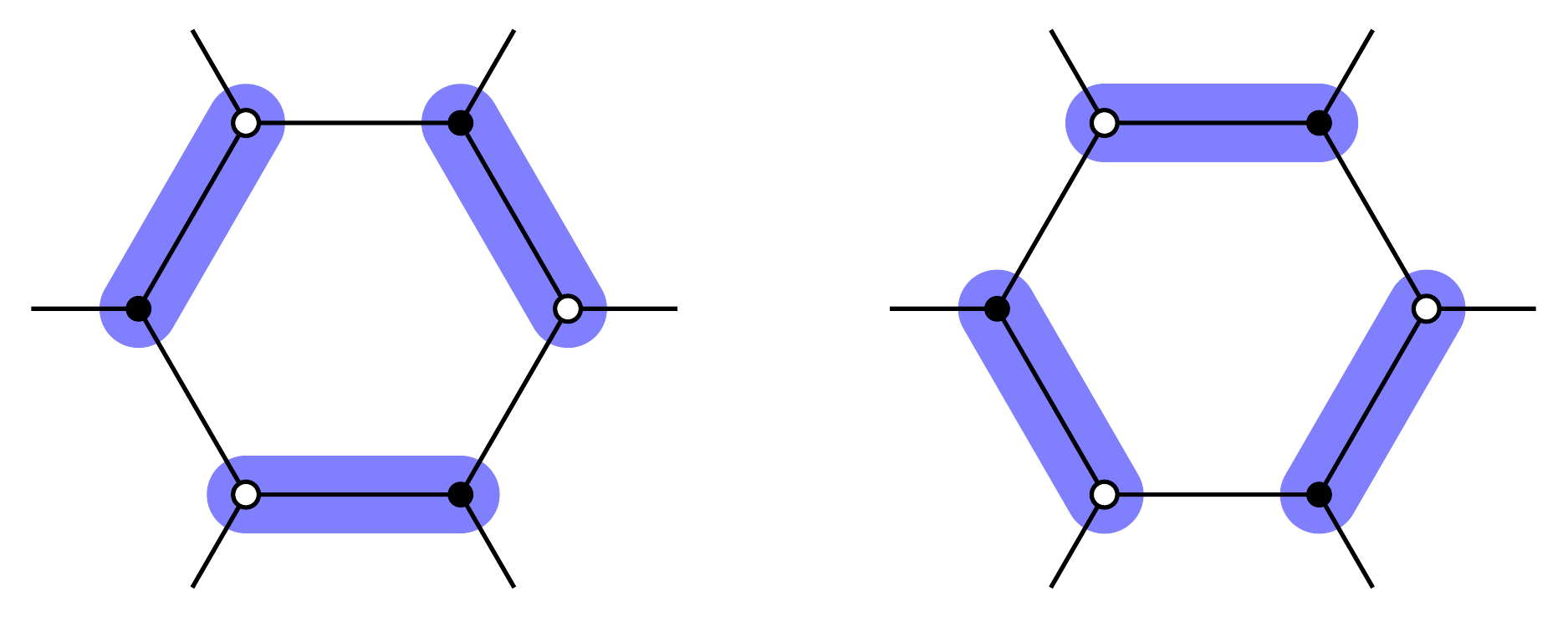}
\end{center}
\caption{\label{facemove} Suppose every other edge of a face is occupied by a dimer; one can shift all dimers around that face
to get a new dimer cover. This is the \emph{face move}. Such moves connect the set of all dimer covers for any 
planar bipartite graph which is connected and nondegenerate (every edge is used in some dimer cover). For general planar bipartite graphs a slight generalization of these works.}
\end{figure}

We now show that two edge-$n$-colorings $C,C'$ have the same sign. 
The idea is to move the color-$1$
edges of $C$ to those of $C'$ using face moves.
Then move the color-$2$ edges using face moves, and so on, until $C=C'$.

The proof is different when $n$ is odd and $n$ is even. Suppose first that $n$ is odd.

When we rotate color-$1$ edges around a face $f$, keeping the other colors fixed, 
we create a new colored multiweb $m^\prime$ with color $C^\prime$ (that is, $m^\prime$ has different edge multiplicities 
than $m$). 
Let us compute how the sign changes.
Suppose $f$ has vertices $b_1,w_1,\dots,b_k,w_k$ in counterclockwise order,
and edges $b_iw_i$ have multiplicities $M_i$.    
Since $n$ is odd, the trace is independent of the choice of cilia, so we can rotate the cilia at vertices of $f$ so that at each vertex of $f$ the cilia lies in $f$, that is,
the linear order starts inside the face $f$.
Then the first edge out of $b_i$ is $b_iw_{i-1}$ and the first edge out of $w_{i-1}$ is $w_{i-1}b_{i}$.  (Note here we have used that black vertices are oriented counterclockwise, and white vertices clockwise.)
Suppose in $C$ edge $b_iw_{i-1}$ contains color $1$. When we rotate this color
so that edge $b_iw_i$ now has color $1$,
in the linear ordering of colors at $b_i$ color $1$ has moved from the beginning to position $M_i$ from the end
($M_i$ edges separate it from the cilium).  
Likewise
at $w_{i-1}$, color $1$ has moved from the beginning to position $M_{i-1}$ from the end.   The net sign change after the face move,
which involves all color-$1$ edges of the face, is $(-1)^{\sum_{i=1}^k(M_{i-1}+M_{i})}=1$ where $M_0:=M_k$. The sign is thus preserved. 

Likewise for the other colors: changing the ordering of the colors does not change the sign (since 
$\G$ has an even number of vertices) 
so any other color can be considered
to be the first color, and then the same argument holds.

The global sign can be determined by transforming all dimer covers to the same dimer cover, so that the multiweb
is a union of $n$-fold edges. In this case the sign is $+$, since at each vertex the sign is $+$.  This completes the proof in the case $n$ is odd.

Now assume $n$ is even. The argument works as for the $n$ odd case if all cilia of vertices on a face $f$ are located 
in the face $f$. However
we also pick up a sign change from rotating the cilia. When we rotate the cilium at a vertex on $f$ to move it into face $f$, we pick up a sign change
depending on the parity of the total multiplicity of the edges we cross (since $n$ is even, moving it clockwise or counterclockwise results in the same sign change). Then, after rotating color $1$ around $f$, we rotate the cilium
back to its original position; we get another sign change. However the multiplicities of edges being crossed have changed by exactly $1$, so the new sign change is the opposite of the first sign change, and so the net sign change is $-1$. Thus rotating color $1$ at face $f$ gives a net sign change of $(-1)^{\text{out cilia}}$,
that is, $-1$ per cilia of vertices on $f$ which are not pointing into $f$. 

Now given two colorings of the same multiweb $m$, we claim that when we transform one to the other doing face moves,
each face move is performed an even number of times, that is, each face is toggled an even number of times. 
To see this, take a path $\gamma_f$ 
in the dual graph from $f$ to the exterior face.  Compute the $\Z/2\Z$-intersection number $X_f=\langle\gamma_f,m\rangle$ 
of $\gamma_f$ with the multiweb $m$ counting multiplicity (note this does not depend on the coloring of $m$).
Each $f$-face move changes $X_f$ by $1\bmod 2$, and each face move by any other faces changes $X_f$ by $0\bmod 2$. 
Any sequence of face moves changing $m$ back to $m$, but with a possibly different coloring, necessarily does not change $X_f$.  We don't ever need to toggle the outer face.

This completes the proof. 
\end{proof}

From the above proof we see that we can arrange the cilia so that (for $n$ even and the identity connection) all traces are positive: 

\begin{cor}\label{poscilia} For $n$ even and the identity connection, if cilia $L$ are chosen so that each face of $\G$ has an even number of cilia, then 
for any multiweb $m$ in $\G$, $\Tr_L(m,I)$ is equal to the number of edge-$n$-colorings.
\end{cor}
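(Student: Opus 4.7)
The plan is to leverage the sign-change bookkeeping already carried out in the proof of Proposition~\ref{tracecolorplanar}, and only pin down the global sign. Proposition~\ref{tracecolorplanar} already gives $\Tr_L(m,I) = \epsilon(m)\cdot N(m)$ where $N(m)$ is the number of edge-$n$-colorings of $m$ and $\epsilon(m)\in\{\pm 1\}$ is independent of which coloring one picks; the corollary is the assertion that $\epsilon(m)=+1$ for all $m$ under the parity hypothesis on cilia.

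First I would treat a base case. Assuming $\G$ admits a dimer cover $D$ (otherwise $\Omega_n(\G)=\emptyset$ and the corollary is vacuous), set $m_0 := n\cdot D$. Then $m_0$ has a unique edge-$n$-coloring: the single $n$-fold edge incident to each vertex receives the full color set $\{1,\dots,n\}$, listed in natural order. Hence $c_v=+1$ at every vertex, and Proposition~\ref{tracecolor0} gives $\Tr_L(m_0,I)=+1=N(m_0)$, independent of the cilia data.

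Next I would propagate from $(m_0,c_0)$ to arbitrary colored multiwebs via single-color face moves. Given any edge-$n$-coloring $c$ of any multiweb $m$, the color-$i$ edges form a dimer cover $D_i$ of $\G$. By Thurston's face-move connectivity theorem (already used in the proof of Proposition~\ref{tracecolorplanar}), each $D_i$ can be connected to $D$ by a sequence of face moves applied only to color $i$; concatenating these across $i$ yields a sequence of single-color face moves transforming $(m,c)$ to $(m_0,c_0)$.

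Finally I would apply the sign computation from the $n$-even part of the proof of Proposition~\ref{tracecolorplanar}: a single-color face move at a face $f$ contributes a net sign of $(-1)^{o_f}$, where $o_f$ is the number of vertices on $\partial f$ whose cilium does not point into $f$. Since $\G$ is bipartite planar, every face has even boundary length, and the hypothesis that each face has an even number of cilia (pointing in) forces $o_f$ to be even as well. Therefore every face move in the sequence contributes sign $+1$, giving $\epsilon(m)=\epsilon(m_0)=+1$ and hence $\Tr_L(m,I)=N(m)$. The main obstacle is just careful bookkeeping: invoking the $n$-even sign formula exactly as derived in Proposition~\ref{tracecolorplanar} and checking the parity translation (in-cilia even plus even face length implies out-cilia even); no new ingredient beyond Thurston connectivity is required.
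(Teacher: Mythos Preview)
Your proposal is correct and follows essentially the same approach as the paper's proof: both use the $n$-even sign computation from Proposition~\ref{tracecolorplanar} (a single-color face move at $f$ contributes $(-1)^{o_f}$ with $o_f$ the number of out-cilia), observe that the parity hypothesis forces $o_f$ to be even, and anchor the global sign using a multiweb consisting entirely of $n$-fold edges. Your write-up is slightly more explicit than the paper's in spelling out the parity translation (even in-cilia plus even face length gives even out-cilia) and the Thurston connectivity step, but no new idea is involved.
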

\begin{proof}
Under such a choice of cilia, when we perform a rotation at a face, the net sign does not change. Thus every edge-$n$-coloring of every multiweb has the same sign. A multiweb where 
all edges have multiplicity $n$ or $0$ has positive trace.
\end{proof}

We call a choice of cilia $L$ \emph{positive} if for the identity connection it yields positive traces for all multiwebs, and we write $\Tr_+$ instead of $\Tr_L$. 
One way to choose cilia so that each face has an even number (and is thus positive by the corollary) is as follows. Let $m_1$ be a dimer cover of $\G$.
For each edge of $m_1$, choose the cilia at the vertices at its endpoints to be both on the same face containing one of the two sides of $e$. See Figure \ref{ciliafig}

\subsubsection{Finding an edge-\texorpdfstring{$n$}{n}-coloring of a multiweb}
\label{finding}

Any regular bipartite planar graph has a dimer cover \cite{LP}; from this it follows by induction that any $n$-multiweb has an edge-$n$-coloring (an edge-$n$ coloring is a union of $n$ dimer covers).

However
there is a simple algorithm, communicated to us by Charlie Frohman, for finding an explicit edge-$n$-coloring of an $n$-multiweb of a planar graph,
as follows; see also \cite{FrohmanJKnotTheoryRam19}.  
Given a fixed multiweb $m$, define a height function on faces taking values in $\Z/n\Z$ as follows.
On a fixed face $f_0$ define $h(f_0)=0$.  
For any other face $f$
let $\gamma$ be a path in the dual graph from $f_0$ to $f$. The height change along every edge of $\gamma$ is given by the algebraic
intersection number of the common edge with $\gamma$ (considering edges oriented from black to white, and taking multiplicity in $m$ into account). 
In other words given two adjacent faces $f_1,f_2$ with edge $bw$ between them of multiplicity $k$, 
where $f_1$ is on the left, then $h(f_1)-h(f_2) = k$. Note that $h$ is well-defined since 
it is well-defined around each vertex.  

Given $h$, the color set of an edge of multiplicity $k$ is the interval of colors $\{a,a+1,\dots,a+k-1\}$ if the face to its right has height $a$.

\subsection{Trace examples}

\subsubsection{Theta graph}
\label{sssec:thetagraph}

Consider the following example for $\SLth$-connections (as opposed to $\mathrm{M}_3$-connections).
Let $m$ be a ``theta'' graph consisting of two vertices $w,b$ with three edges between them $e^1,e^2,e^3$, each of multiplicity $1$,
in counterclockwise order at $b$ and clockwise order
at $w$ (with respect to a planar embedding of the graph). Let $A,B,C$ be the parallel transports from $b$ to $w$
along edges $e^1,e^2,e^3$ respectively (recall that we always orient edges from black to white). We label the basis vectors using three colors, red, green, blue: $e_r,e_g,e_b$. (As a word of caution, the ``$b"$ refers here to the color blue; the ``$b$" on a vertex, e.g. $v_b$, refers to the color black.)
Then
$$v_b=e_r\otimes e_g\otimes e_b - e_r\otimes e_b\otimes e_g+ \dots -e_b\otimes e_g\otimes e_r$$
and
$$u_w=f_r\otimes f_g\otimes f_b - f_r\otimes f_b\otimes f_g+ \dots -f_b\otimes f_g\otimes f_r.$$

The contraction contains $36$ terms; for example, contracting the first terms
of $v_b$ and $u_w$ gives $A_{rr}B_{gg}C_{bb}$, and contracting the first term of $v_b$ and the second term of $u_w$ gives $-A_{rr} B_{bg} C_{gb}$. Summing, the trace can be written compactly as
$$\Tr(m) = \tr(AB^{-1})\tr(CB^{-1})- \tr(AB^{-1}CB^{-1})$$ (see Section \ref{sssec:basicskein} below)
or, more symmetrically, as
\be\label{xyz}[xyz]\det(xA+yB+zC),\ee
that is, the coefficient of $xyz$ in the expansion of the determinant of $xA+yB+zC$ as a polynomial in $x,y,z$
(see (\ref{xyzthm}) below); note \eqref{xyz} is valid for any $\mathrm{M}_3$-connection.

Also notice that when $A=B=C$ then the trace $\Tr(m)=+6$; in this case the $\mathrm{SL}_3$-local system is trivializable, and the trace is thus the number of edge-$3$-colorings (see Proposition \ref{tracecolorplanar}). 

More generally, for the $\mathrm{M}_n$-connection on the  $n$-theta graph $m$ (two vertices with $n$ edges) consisting of the same matrix $A \in \M_n$ attached to each edge, by Proposition \ref{tracecolor0} the trace is $\Tr(m) = \pm n! \mathrm{det}(A)$.

As one last variation, again in the case $n=3$, note that if the cyclic ordering of the vertices is that induced by the nonplanar embedding of the theta graph in the torus as in Figure \ref{theta}, then its trace $\Tr(m)$ with respect to the identity connection is equal to $-6$ (since we just reverse the cyclic order at one of the two vertices); contrast this with the calculation above and Proposition \ref{tracecolorplanar}.
\begin{figure}[htbp]
\center{\includegraphics[width=1in]{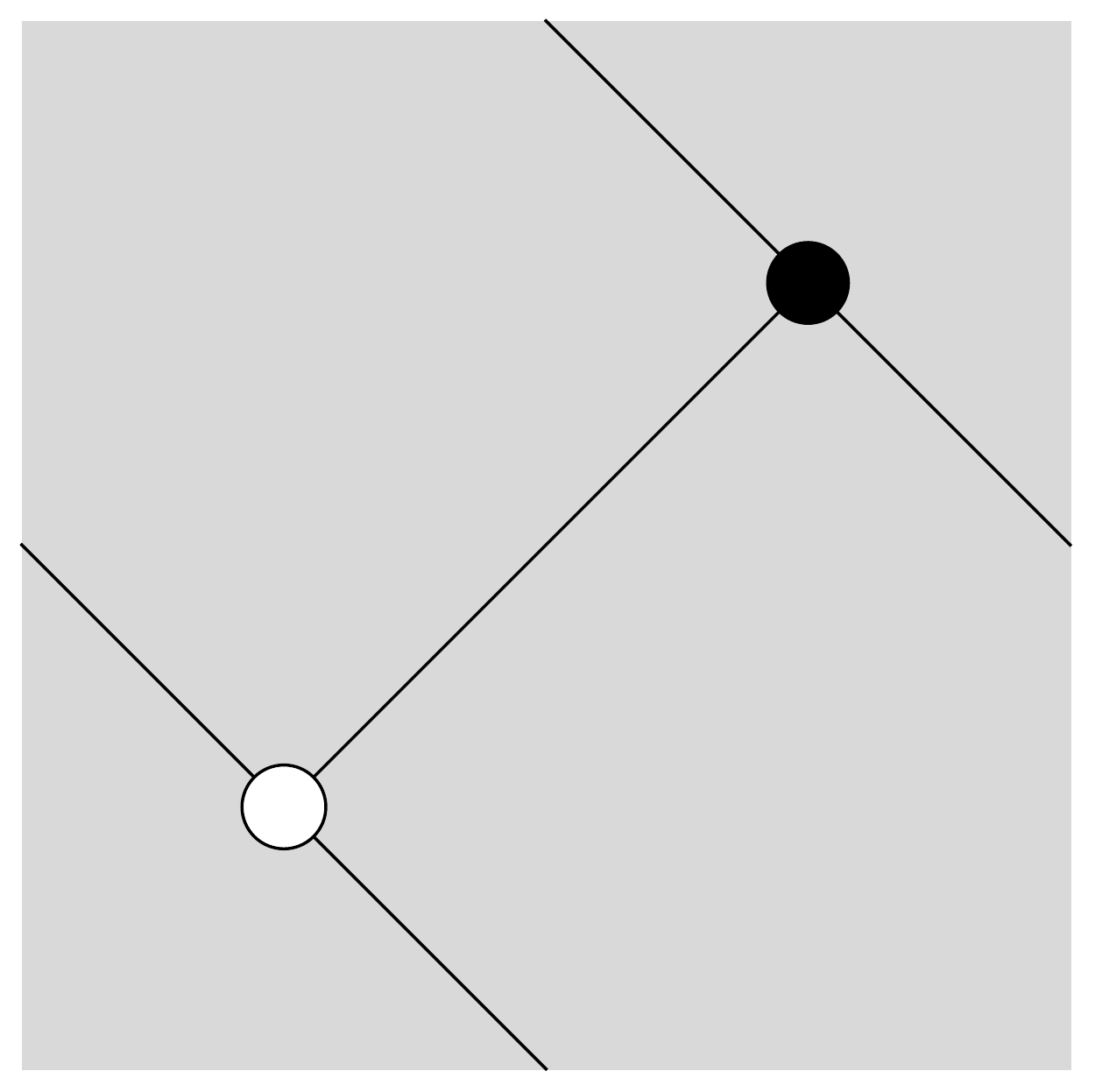}}
\caption{\label{theta}The theta graph embedded in the torus}
\end{figure}

\subsubsection{Loop}\label{loop}

As another example with $n=3$, let $m$ consist of a cycle $b_1w_1b_2w_2$ with edges $b_{i}w_{i}$ of multiplicity $1$, and edges $b_iw_{i-1}$ of multiplicity $2$, as in Figure \ref{4cycle}. 
\begin{figure}[htbp]
\center{\includegraphics[width=1in]{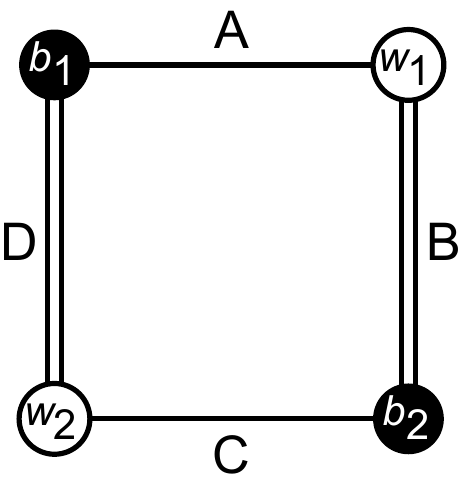}}
\caption{\label{4cycle}A $4$-cycle in a $3$-multiweb.}
\end{figure}
At the black vertex $b_1$, say, 
$$v_{b_1} = \sum_\sigma(-1)^{\sigma} e_{\sigma(1)}\otimes e_{\sigma(2)}\otimes e_{\sigma(3)}=
(e_r\wedge e_g)\otimes e_b +(e_g\wedge e_b)\otimes e_r +(e_b\wedge e_r)\otimes e_g.$$ 
Here we have used the wedge notation $e\wedge e':= e\otimes e' - e'\otimes e \in \R^3 \otimes \R^3$. 
\
Choosing in addition an $\mathrm{M}_3$-connection as in the figure, starting at $b_1$ this leads to 
\begin{equation}\label{eq:looptrace}  \Tr(m) = \tr(
\widetilde{D \wedge D}\cdot C\cdot 
\widetilde{B\wedge B}\cdot A).\end{equation} 
For an $\SLth$-connection, this becomes $\Tr(m)=\tr(D^{-1}CB^{-1}A)$, namely the trace of the monodromy when the cycle is oriented clockwise in the figure. In particular, note that
even though the cycle is not naturally oriented, the $3$-web-trace nevertheless picks out an orientation: the one determined by following from black to white along the non-doubled edges.

Equation \eqref{eq:looptrace} needs to be 
interpreted, where $
\widetilde{B \wedge B} \in \mathrm{M}_3(\mathbb{R})$ is defined as follows:  Let $
B \wedge B \in \mathrm{M}_3(\mathbb{R})$ be the matrix of the linear map $
\Lambda^2 \mathbb{R}^3 \to \Lambda^2 \mathbb{R}^3$ induced by $B$ written in the 
ordered basis $\left\{ e_r \wedge e_g, e_r \wedge e_b, e_g \wedge e_b \right\}$, and let $\varphi = \left( \begin{smallmatrix} 0&0&1\\0&-1&0\\1&0&0 \end{smallmatrix} \right)$.  Then $
\widetilde{B \wedge B}  := ( \varphi \circ 
(B \wedge B) \circ \varphi^{-1} )^t$.
Here $\varphi$ induces an isomorphism 
%(of $\SLth$-representations)
 $
\Lambda^2 \mathbb{R}^3 \cong 
(\Lambda^1 \mathbb{R}^3)^*= (\mathbb{R}^3)^*$, and the transpose in the formula for $
\widetilde{B \wedge B}$ allows one to pass from the dual $(\mathbb{R}^3)^*$ to $\mathbb{R}^3$.  It also corresponds to the transpose in the formula for the inverse of $B$ in terms of its cofactor matrix, so that $
\widetilde{B \wedge B} = B^{-1}$ when $B \in \mathrm{SL}_3(\mathbb{R})$; hence, we have the formula above for the trace $\Tr(m)$ for $\mathrm{SL}_3$-connections.

For general $n$, consider a cycle $b_1w_1b_2w_2\dots  b_\ell w_\ell$ with edges
$b_{i}w_{i}$ of multiplicity $k$, and
edges $b_iw_{i-1}$ of multiplicity $n-k$, and an $M_n$-connection with parallel transport $A_i$ from $b_i$ to $w_{i}$, and $B_{i-1}$ from $b_{i}$ to $w_{i-1}$.  Then, we have 
$\Tr_L(m) = \pm\tr(C)$, where $C$ is the product $$C=(%\wedge^{n-k}B_\ell
\widetilde{\wedge^{n-k}B_\ell})(\wedge^{k} A_\ell) \cdots (%\wedge^{n-k} B_2
\widetilde{\wedge^{n-k} B_2}) (\wedge^{k}A_2)(%\wedge^{n-k}B_1
\widetilde{\wedge^{n-k}B_1})(\wedge^{k}A_1)$$
appropriately interpreted, similar to the case $n=3$, using the isomorphism 
$\Lambda^{n-k} \mathbb{R}^n \cong (\Lambda^k \mathbb{R}^n)^*$.  For instance, 
$$\varphi=\left(\begin{smallmatrix}
0&0&0&0&0&1\\
0&0&0&0&-1&0\\
0&0&0&1&0&0\\
0&0&1&0&0&0\\
0&-1&0&0&0&0\\
1&0&0&0&0&0
\end{smallmatrix}\right)$$ in the case $n=4$, $k=2$, \and with respect to the basis $\left\{ e_{i_1} \wedge e_{i_2}; 1 \leq i_1 < i_2 \leq 4 \right\}$ ordered lexicographically.

For an $\SL$-connection with total monodromy $M=B_\ell^{-1} A_\ell \cdots B_1^{-1} A_1$
clockwise around the loop, we thus have
$$\Tr_L(m) = \pm\tr(\wedge^k M).$$
Indeed, by first taking advantage of the $\mathrm{SL}_n$-gauge invariance of the web-trace (Proposition \ref{indep}) to concentrate the connection $M$ entirely on the edge $b_1 w_1$, this is then an immediate consequence of the above equation. We remind that for $n$ odd the sign is $+$ but for $n$ even the sign depends on the choice of cilia~$L$.  Note that for all $n$, when $k=n$, corresponding to when $m$ consists of $\ell$ disjoint $n$-multiedges,  we obtain $\Tr_L(m)=+\mathrm{det}(M)=1$.

\subsubsection{Nonplanar example}

Consider the complete bipartite graph on three vertices $m=K_{3,3}$.  This is nonplanar.  For the cyclic ordering on the vertices induced by the embedding of $m$ in the torus as shown in Figure \ref{K33}, one computes $\Tr(m, I_3)=0$;  
\begin{figure}
\begin{center}\includegraphics[width=1in]{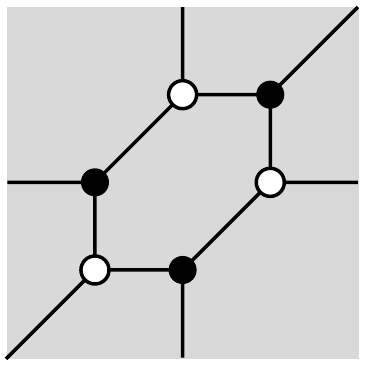}
\end{center}
\caption{\label{K33}An embedding of the graph $K_{3,3}$ in the torus}
\end{figure}
 for example, one can use either the definition or the basic skein relation (Section \ref{sssec:basicskein}).  On the other hand, $m$ has $12$ edge-$3$-colorings.  This demonstrates that Proposition \ref{tracecolorplanar} is special to planar graphs.

\section{Kasteleyn matrix}

Let $\G$ be a bipartite planar graph (see Section \ref{planarwebscn}), with the same number $N$ of black vertices and white vertices.
We fix a choice of positive cilia $L$ for $\G$, in the sense of Corollary \ref{poscilia} and its subsequent paragraph.
Let $\Phi=\{\phi_e\}_{e\in E}$ be an $\M_n(\R)$-connection on $\G$.  
	
Let $K\in \M_N(\M_n(\R))$ be the associated Kasteleyn matrix:
$K$ has rows indexing white vertices and columns indexing black vertices, with entries $K(w,b)=0_n$ (the $n\times n$ zero matrix) if $w,b$ are not adjacent and otherwise
$K(w,b) = \eps_{wb} \phi_{bw}$,
where the signs $\eps_{wb}\in\{-1,+1\}$ are given by the Kasteleyn rule (see Section \ref{dimermodel}). For the purposes of
defining Kasteleyn signs we impose the sign condition on cycles of $\G$ bounding each complementary component (not just contractible ones). 
We let $\tilde K=\tilde K(\Phi)$ be the $nN\times nN$ matrix obtained from $K$ by replacing each entry
with its $n\times n$ array of real numbers.

Note there is also the obvious coordinate-free description.  Given an $\mathrm{End}(V)$-connection $\Phi$ on $\G$, the Kasteleyn determinant $\det \tilde{K}(\Phi)$, depending on a choice of Kasteleyn signs for $\G$ (see Section \ref{dimermodel})  and an ordering of the black and white vertices, is defined as the determinant of the induced linear endomorphism $V^{|B|} \to V^{|W|}$.   

Note
that $\mathrm{det} \tilde{K}(\Phi)=\mathrm{det} \tilde{K}(\Phi^\prime)$ if $\Phi$ and $\Phi^\prime$ are $\SL$-gauge equivalent.  (The gauge group $(\SL)^V$ acts on $K$ by left- and right-multiplication by diagonal matrices with entries in $\SL$,
or equivalently, acts on $\tilde K$ by left- and right-multiplication by block-diagonal matrices with blocks in $\SL$.
These gauge equivalences do not change the determinant of $\tilde K$.) 
It is not true in general that $\det \tilde{K}(\Phi)=\det \tilde{K}(\Phi^\prime)$ if $\Phi$ and $\Phi^\prime$ are only 
$\GL$-gauge equivalent.

Recall that $\Omega_n(\G)$ is the set of $n$-multiwebs in $\G$.  
Our main theorem is
\begin{thm}\label{main}  Let $\Phi$ be an $\mathrm{End}(V)$-connection on the planar bipartite graph~$\G$.

For $n$ even, and a choice of positive cilia $L$ as discussed above,
\be\label{maineq1}
	\det \tilde K(\Phi) = \sum_{m \in \Omega_n(\G)} \Tr_+(m).
\ee

For $n$ odd, 
\be\label{nodd}
	\!\!\!\!\!\!\!\!\!\!\!\pm \det \tilde K(\Phi) = \sum_{m \in \Omega_n(\G)} \Tr(m).
\ee
\end{thm}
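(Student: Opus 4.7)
The plan is to expand $\det \tilde K(\Phi)$ via the Leibniz formula and to reorganize the resulting sum by the underlying multiweb and its coloring. Index the $nN$ rows of $\tilde K$ by pairs $(w, j)$ with $w$ white and $j \in \CC$, and similarly its columns by $(b, i)$. The Leibniz formula gives
\be
\det \tilde K(\Phi) = \sum_\sigma \mathrm{sgn}(\sigma) \prod_{(w,j)} \tilde K_{(w,j),\, \sigma(w,j)},
\ee
with nonzero terms requiring $b \sim w$ whenever $\sigma(w,j) = (b,i)$, in which case that factor equals $\eps_{wb}(\phi_{bw})_{j,i}$. From each such $\sigma$ I would extract: a multiweb $m \in \Omega_n(\G)$ whose multiplicity $m_e$ for $e = bw$ counts the pairs sent across $e$; a coloring $c$ of $m$ as in Proposition \ref{tracecolor0}, with half-edge color sets $T_e$ at $b$ and $S_e$ at $w$ being the natural projections of the selected column/row indices onto $\CC$; and, along each edge, a bijection $\tau_e : T_e \to S_e$ recording the fine matching of colors.

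For fixed $(m, c)$, summing the Leibniz contributions over the edge bijections $\tau_e$ converts the matrix-entry product $\prod_{(w,j)} (\phi_{bw})_{j,i}$ into $\prod_{e=bw} \det(\phi_{bw})_{S_e, T_e}$, the $\tau_e$-signs being absorbed (again by Leibniz) into each minor. After this, the residual piece of $\mathrm{sgn}(\sigma)$ is the sign of the permutation reordering the cilium-linear list of half-edges at each vertex (with the colors of any multi-edge appearing in consecutive natural order) into a globally fixed total order; by the construction in Section \ref{ssec:multiwebtraces} this sign is exactly $\prod_v c_v$, up to a global sign depending only on the chosen vertex ordering. Invoking Proposition \ref{tracecolor0} and summing over $c$ for each fixed $m$ yields
\be
\det \tilde K(\Phi) = \pm \sum_{m \in \Omega_n(\G)} \left( \prod_{e=bw} \eps_{wb}^{m_e} \right) \Tr_L(m, \Phi).
\ee

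The final step is to show that the Kasteleyn factor $\prod_{e=bw} \eps_{wb}^{m_e}$ is the same for every $m \in \Omega_n(\G)$, up to the $\pm$ allowed in the theorem. I would fix a reference multiweb $m_0$ (for instance, $n$ copies of a chosen dimer cover) and interpolate to an arbitrary $m$ via a sequence of face-local modifications; each such move alters the Kasteleyn factor by a sign dictated by the Kasteleyn face condition, and simultaneously alters $\Tr_L(m,\Phi)$ by the same sign, by the planar argument already used in Proposition \ref{tracecolorplanar} and Corollary \ref{poscilia} (in the even case) and via Proposition \ref{click} (in the odd case). This yields (\ref{maineq1}) for positive cilia when $n$ is even and (\ref{nodd}) when $n$ is odd. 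The main obstacle is exactly this last Kasteleyn-sign bookkeeping: it is the multi-edge generalization of the classical Kasteleyn theorem and of the double-dimer argument of \cite{Kenyon14CMP}, and the key input is planarity, which guarantees that any two multiwebs differ by a $\Z$-valued $1$-cycle decomposable into face-bounding loops, on each of which the Kasteleyn sign rule precisely cancels the cilium-reordering sign.
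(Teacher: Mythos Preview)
Your overall strategy---Leibniz expansion, grouping by multiweb, then by coloring, then summing out the edgewise bijections $\tau_e$ to produce the minors $\det(\phi_{bw})_{S_e,T_e}$---is exactly the paper's approach (the paper phrases it via an auxiliary ``blown-up'' graph $\G_n$, but this is cosmetic). The gap is in your sign analysis.

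You assert that the residual piece of $\mathrm{sgn}(\sigma)$, after absorbing the $\tau_e$-signs into the minors, equals $\prod_v c_v$ up to a sign depending only on the vertex ordering. This is not true. Writing $\sigma_0$ for this residual permutation, its sign decomposes as $\prod_v c_v$ times the sign of an $m$-dependent ``matching'' permutation in $S_{nN}$ recording which black vertex each white half-edge is sent to. Concretely, restricting to an edge-$n$-coloring, $\sigma_0$ is a product of $n$ single-dimer-cover permutations in $S_N$, and different dimer covers of a planar graph do \emph{not} have the same permutation sign---that is the whole point of introducing Kasteleyn signs. Thus your displayed intermediate formula $\det\tilde K(\Phi)=\pm\sum_m\bigl(\prod_e\eps_{wb}^{m_e}\bigr)\Tr_L(m,\Phi)$ with a \emph{single} global $\pm$ is false as written: the $\pm$ secretly depends on $m$.

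Your final paragraph then tries to repair this by claiming that a face move alters $\prod_e\eps_{wb}^{m_e}$ and $\Tr_L(m,\Phi)$ by the same sign. But $\Tr_L(m,\Phi)$ is an intrinsic number attached to $(m,\Phi)$; it does not flip sign under face moves (and for the identity connection it is the number of edge-$n$-colorings, hence positive). What actually flips is the hidden $m$-dependent matching sign inside $(-1)^{\sigma_0}$, and the correct argument keeps $(-1)^{\sigma_0}$ and $\prod_e\eps_{wb}^{m_e}$ \emph{together}: for an edge-$n$-coloring their product is the $n$th power of the single-dimer Kasteleyn sign, hence a constant (Kasteleyn's theorem), while $\prod_v c_v$ is also constant by Proposition~\ref{tracecolorplanar} and Corollary~\ref{poscilia}; one then extends to general colorings by transpositions at a vertex, which change $(-1)^{\sigma_0}$ and $\prod_v c_v$ equally. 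In short, the Kasteleyn signs are not a separate bookkeeping step after the $\prod_v c_v$ identification---they are the mechanism that makes that identification $m$-independent.
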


Note for $n$ odd, we can write $\Tr$ in (\ref{nodd}) without reference to a choice of cilia since $\Tr_L$ is independent of $L$ by Proposition \ref{click}.

Note for $n$ even, one can check that for arbitrary (rather than positive) choices of cilia $L$ the two sides of (\ref{maineq1})
are not generally equal even up to sign.

The theorem implies that for trivial connections, and $n$ even, the determinant of the Kasteleyn matrix of a planar bipartite graph admitting a dimer cover is always strictly greater than zero. In particular, it does not depend on the choice of Kasteleyn signs.

Also note that for $n=2$ the theorem gives a new and different proof of a more general version of Theorem 2 of \cite{Kenyon14CMP}.

For our definition of $\tilde K$ the sign ambiguity is inevitable when $n$ is odd since the sign of 
$\det\tilde K$ depends on an arbitrary choice of order for both white vertices and black vertices.   (It also depends on the choice of Kasteleyn signs, unlike the $n$ even case.)

\begin{proof}  
We assume for notational  
clarity that $\G$ is a simple graph; if $\G$ has multiple edges between pairs of vertices, a slight variation of the following proof will hold. 

Let $\G_n$ be the graph obtained from $\G$ by replacing each vertex $v$ with $n$ copies $v^1,\dots,v^n$,
and replacing each edge $bw$ with the complete bipartite graph $K_{n,n}$ connecting the $b^j$ and $w^i$.  See Figure \ref{proof}.
For edge $e=bw$ of $\G$, let $A_{bw}=\eps_{wb}\phi_{bw}$ be the parallel transport $\phi_{bw}$ times the Kasteleyn sign $\eps_{wb}$.
For $i,j\in\CC=\{1,2,\dots,n\}$ put weight $A_{bw}^{ij}:=(A_{bw})_{ij}$ on the edge $b^jw^i$ of $\G_n$ lying over $e$. 
If any entry $A_{bw}^{ij}$ is zero, we remove that edge from $\G_n$.

Now when expanded over the symmetric group $S_{nN}$, 
nonzero terms in $\det\tilde K$ are in bijection with single-dimer covers $\sigma$
of $\G_n$: a single dimer cover $\sigma$ is a bijection from black vertices $b^j$ to adjacent white vertices $w^i$,
 and has ``weight" $(\tilde K)_\sigma := (-1)^\sigma\prod(A_{bw})_{ij}$ where the product is over dimers in the cover and the sign is the signature of the permutation defined by $\sigma$.

 \begin{figure}[t]
\center{\includegraphics[scale=1.5]{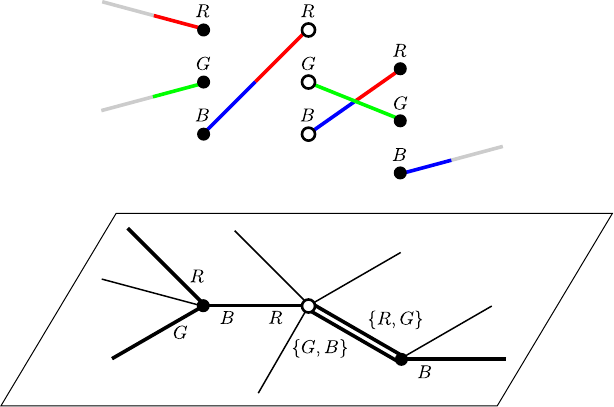}}
\caption{\label{proof}  Example in the case $n=3$.  The graph $\G$ is equipped with a $3$-multiweb $m$ (bottom), and one of many possible lifts of $m$ to $\G_n$ is chosen (top).  Note that only part of $\G$ is shown.  There are two possible lifts of $m$ over this part; the other lift connects G to R, and B to G, over the doubled edge.}
\end{figure}

Each single-dimer cover $\sigma$ of $\G_n$ projects to an $n$-multiweb $m$ of $\G$.
We group all single dimer covers of $\G_n$ according to their corresponding $n$-multiweb $m$.
That is
$$\det\tilde K = \sum_{m\in\Omega_n(\G)}\sum_{\sigma\in m} (\tilde K)_\sigma.$$
We claim that the interior sum is, up to a global sign, $\Tr_L(m)$; this will complete the proof.
That is, we need to prove, for a constant sign $s$ (independent of $m$) and for $m\in\Omega_n(\G)$, 
\be\label{det1}s\Tr_L(m) = \sum_{\sigma\in m}(-1)^\sigma\prod_{e=\tilde b\tilde w}(A_{bw})_{\tilde w\tilde b},\ee
where the product is over edges of the dimer cover $\sigma$ of $\G_n$, or in other words, $\tilde b=\sigma(\tilde w).$
In this product, $\tilde b\tilde w\in E(\G_n)$ lies over an edge $bw$ of the multiweb $m$; the vertex $\tilde w$ corresponds to a choice of color of the half edge at $w$ and $\tilde b$ corresponds to a choice of color of the half edge at $b$. 
The edge $\tilde b\tilde w$ has an associated sign $\eps_{\tilde w\tilde b}=\eps_{wb}$. 

Now we group the $\sigma\in S_{nN}$ according to colorings of the edges of $\G$. 
An edge $e=bw$ of $\G$ of multiplicity $k$ is colored by two sets $S_e,T_e$ of colors, both of size $k$, where $S_e$ is associated to the white vertex and $T_e$ to the black vertex. 
There are then $k!$ corresponding dimer covers of $\G_n$ lying over that edge, one for each bijection $\pi_e$ from $S_e$ to $T_e$.
We group the $\sigma$ into colorings $c$ with the same sets of colors $S_e,T_e$ on each edge. Each permutation $\sigma$
corresponds to a choice of such a coloring $c$ of $m$ and a choice, for each edge $e$ of multiplicity $k$,
of a bijection $\pi_e$ between the sets $S_e$ and $T_e$. 
After this grouping we can write the RHS of (\ref{det1}) as a sum over colorings $c$ of $m$:
\begin{align}\nonumber&= \sum_{c}\sum_{\sigma\in c}(-1)^\sigma\prod_{e=\tilde b\tilde w}\eps_{\tilde w\tilde b}\prod_{e=\tilde b\tilde w}(\phi_{bw})_{\tilde w\tilde b}\\
\nonumber&= \sum_{c}\left(\prod_{e=\tilde b\tilde w}\eps_{\tilde w\tilde b}\right)\sum_{\sigma\in c}(-1)^\sigma\prod_{e=\tilde b\tilde w}(\phi_{bw})_{\tilde w\tilde b}
\end{align}  
where the products are over edges of the dimer cover of $\G_n$, and
where we used the fact that, once the multiplicities $m$ are fixed,  $\prod\eps_{\tilde w\tilde b}$ is independent of $\sigma$ (and, in fact, of $c$ as well). 
(Indeed, $\prod\eps_{\tilde w\tilde b}=\prod_{bw\in E(\G)}(\eps_{wb})^{m_{bw}}$ where $m_{bw}$ is the edge multiplicity.)
Now $\sigma\in S_{nN}$ is a composition $\sigma=(\prod\pi_e)\sigma_0$ of a permutation $\sigma_0$ (which depends only on $c$,
and is the permutation matching each element of each $S_e$ with the corresponding element of $T_e$ when both sets are taken in their natural order) and the individual $\pi_e$. More precisely, we should write $\sigma = (\prod_e \pi'_e)\sigma_0$
where $\pi'_e$ is the bijection from $S_e$ to $S_e$ which, when composed with the natural-order bijection $\sigma_0$ from $S_e$ to $T_e$, gives $\pi_e$.  Thus 
\begin{align}
\nonumber&= \sum_{c}\left(\prod\eps_{\tilde w\tilde b}\right)(-1)^{\sigma_0}\prod_{e=bw}\sum_{\pi'_e}(-1)^{\pi'_e}\prod_{s\in S_e} ((\phi_{bw})_{S_e,T_e})_{s,\pi'_e(s)}\\
&= \sum_{c}\left(\prod\eps_{\tilde w\tilde b}\right)(-1)^{\sigma_0}\prod_{e=bw}\det(\phi_{bw})_{S_e,T_e}.\label{detline}
\end{align}

Recalling (see Proposition \ref{tracecolor0}) the definition of trace of a multiweb we have
\be\label{trline}\Tr_L(m) =\sum_{c}\prod_v c_v\prod_{e=bw}\det(\phi_{bw})_{S_e,T_e}\ee
where the sum is over colorings $c$ of the half-edges, and the product is over edges of $m$.  
There is a one-to-one correspondence between the terms of (\ref{detline}) and those of (\ref{trline}); it remains to compare their signs.

Let us take a \emph{pure} dimer cover of $\G_n$: a dimer covering which matches like colors. Then each $\pi_e'$ is the identity map, and $\sigma=\sigma_0$.
This dimer cover projects to an edge-$n$-coloring of $\G$.  (For each $m$, such an edge-$n$-coloring $c$ exists by Proposition \ref{tracecolorplanar}; furthermore given an edge-$n$-coloring $c$, there is a unique pure dimer cover of $\G_n$ projecting to $c$.)
By Kasteleyn's theorem \cite{Kenyon09Statisticalmechanics}, the sign $(-1)^{\sigma_0} \prod\eps_{\tilde w\tilde b}$ is constant (varying over \textit{all} pure dimer covers of $\G_n$, allowing $m$ to vary as well), 
in fact a constant to the $n$th power,
since each dimer cover of a given color contributes the same sign. 
%That is, the signature $(-1)^\sigma=(-1)^{\sigma_0}$ of a pure dimer cover exactly cancels the product 
% $\prod\eps_{\tilde w\tilde b}$ of the Kasteleyn signs (up to a global sign choice). 
Likewise by Proposition \ref{tracecolorplanar} and Corollary \ref{poscilia}, $\prod_v c_v$ is a constant
equal to $1$ for either $n$ odd or for $n$ even with positive cilia.

So we just need to compare the sign of an arbitrary coloring with an edge-$n$-coloring. 
Suppose we change a coloring by transposing two colors at a single, without loss of generality white, vertex $w$. 
For the purposes of computing the sign change
we can assume $m$ is 
proper: all multiplicities are $1$, by splitting multiple edges into parallel edges. 
Now in this case under a transposition of colors at $w$ both $\sigma_0$ and $c_w$ change sign, so both
(\ref{detline}) and (\ref{trline}) change sign.
Since (clearly) transpositions at vertices connect the set of all colorings   we see that (\ref{detline}) and (\ref{trline})
agree up to a global sign for all colorings (and all $m$) for $n$ odd, and are actually equal for $n$ even.
\end{proof}

As an application of Theorem \ref{main}, we can put real edge weights $\{x_e\}_{e\in E}$ 
on the edges of $\G$, multiplying the present connection $\Phi$ by these weights to get a new 
$M_n(\mathbb{R})$-connection $\Phi'$. 
Then $\Tr_L(m,\Phi') = \left(\prod_{e\in m}x_e^{m_e}\right)\Tr_L(m,\Phi)$  by multilinearity of the trace.  This gives
$$
\sum_{m\in\Omega_n(\G)}\left(\prod_{e\in m}x_e^{m_e}\right)\Tr_L(m,\Phi) = \pm\det\tilde K(\Phi').$$

This allows us to compute $\Tr_L(m,\Phi)$ in practice for any particular multiweb $m$ as follows.
Put variable edge weights $x_e$ on edges of $\G$
and extract the coefficient of $\prod_{e\in m}x_e^{m_e}$ from $\det \tilde K$, which can be thought of as a polynomial in the formal variables $x_e$ :
\be\label{xyzthm}\Tr_L(m,\Phi) = \pm \left[\prod_{e\in m}x_e^{m_e}\right]\det \tilde K(\Phi').\ee

In practice, we can take $\G$ to be only the part of the graph that underlies the multiweb $m$ whose trace we are interested in computing.

\section{Reductions for \texorpdfstring{$\SLth$}{SL3}}\label{reductions}
\label{sec:sl3skein}

The trace of a large multiweb is mysterious and hard to compute; the method outlined at the end of the previous section is 
only an exponential-time (in the size of the multiweb) algorithm. While we cannot improve generally on the exponential nature of this
computation, we can make it conceptually simpler in the case of $\mathrm{SL}_3$ by applying  certain reductions, or \textit{skein relations}, as described in \cite{Jaeger92DiscreteMath, Kuperberg96CommMathPhys},  
which simplify its computation.  (Skein relations for webs also exist for $\mathrm{SL}_n$ for $n>3$--see e.g. \cite{SikoraTrans01}--but it is 
more challenging to use them to ``reduce" a web, as in Section \ref{reducedwebs} below.)

In light of Proposition \ref{click} for $n=3$, throughout this section we will systematically omit mentioning cilia data.  Also, ``multiweb'' will always mean ``$3$-multiweb''.

\subsection{Webs in surfaces}
\label{ssec:websinsurfaces}

Let $\G$ be a bipartite graph embedded on a surface $\Sigma$. We say $\G$ \emph{fills $\Sigma$}
if every complementary component of $\G$ is a topological disk or disk with a single hole. When $\G$ fills $\Sigma$ 
we call the complementary components \emph{faces of $\G$}.

If $m$ is a multiweb, call a vertex $v$ \emph{trivalent} if $m_e\in\{0,1\}$ for all edges $e$ of $\G$ incident to $v$.  Note a non-trivalent vertex $v$ is  adjacent to an edge $e$ with $m_e=2$ or $m_e=3$. Edges of $m_e=2$ are part of chains
of edges of alternating multiplicity $1$ and $2$ beginning and ending on trivalent vertices (a single edge of multiplicity $1$ between two trivalent vertices is also considered a chain).  Consider the bipartite trivalent graph $\bar m$ with vertex set the set of trivalent vertices of $m$, whose edges correspond to the chains connecting corresponding trivalent vertices in $m$; in particular, tripled edges are forgotten, and closed chains become loops without vertices.  If $\G$ is equipped with an $\SLth$-connection $\Phi$, this determines an $\SLth$-connection on $\bar m$ (by using $\SLth$-gauge transformations to push the part of $\Phi$ on each chain to the black trivalent vertex  end, say), which by abuse of notation we also call $\Phi$.  The web-trace $\Tr(\bar m,\Phi)$ is thus defined and equal to $\Tr(m,\Phi)$.

Given $\G$, we say that two trivalent graphs $\bar m, \bar m'$ constructed from multiwebs $m$ and $m'$ 
are \emph{equivalent} if they are isotopic on the surface.  Let $[m]$ denote the corresponding equivalence class, called a \emph{web}.

Suppose $\G$ fills $\Sigma$ and, moreover, $\Phi$ is a flat $\SLth$-connection, that is, the monodromy around each contractible face is trivial. Then if $\bar m$, $\bar m'$ are isotopic, we have $\Tr(\bar m,\Phi)=\Tr(\bar m',\Phi)$ and so it makes sense to talk about the web-trace $\Tr([m],\Phi):=\Tr(\bar m,\Phi)$ 
of the web $[m]$ on the surface.  

Below, we often suppress the brackets in the expression $[m]$, and simply refer to $m$ as a web.

\subsubsection{Skein relations for webs}
\label{ssec:skeinrelations}

Let $m_1$, $m_2$, $m_3$ be webs as shown in Figure \ref{skeinsquare}, labeled from left to right.  The webs disagree inside a contractible disk as in the figure, and agree outside of this disk.   We assume that the webs are equipped with flat $\SLth$-connections $\Phi_1$, $\Phi_2$, $\Phi_3$, agreeing outside the disk, and such that the connection is the identity on the edges in the disk.  (Note that when the connection is nontrivial, we can locally trivialize the connection on these edges, since the connection is flat.)  The \emph{type-III skein relation} is the following  relation among web-traces:
\begin{equation*}
	\Tr(m_1, \Phi_1)=\Tr(m_2, \Phi_2)+\Tr(m_3, \Phi_3).
\end{equation*}
Similarly, we have the \emph{type-II} and \emph{type-I skein relations}, also taking place in contractible disks, as displayed in Figures \ref{loopremoval} and \ref{loopremoval3}.

\begin{figure}[htb]
\center{\includegraphics[width=4in]{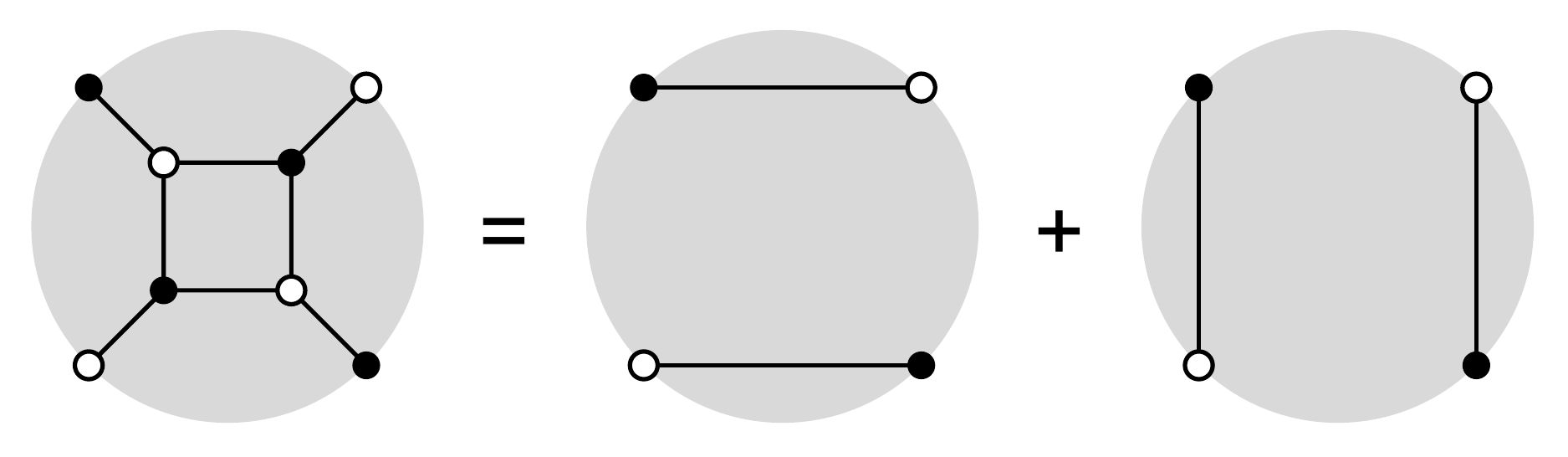}}
\caption{\label{skeinsquare}Type-III:  square removal.}
\end{figure}
\begin{figure}[htb]
\center{\includegraphics[width=2.5in]{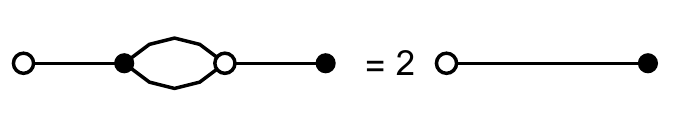}}
\caption{\label{loopremoval} Type-II:  bigon removal.}
\end{figure}
\begin{figure}[htb]
\center{\includegraphics[width=.8in]{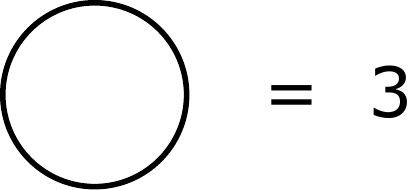}}
\caption{\label{loopremoval3} Type-I:  loop removal.}
\end{figure}

The type-I relation is immediate from the definition of the web-trace.  
We can prove the type-III and type-II relations by using the tensor definition of trace, interpreting terms 
via edge colorings.
We need to check that the signed number of edge colorings of the left- and right-hand sides are the same, for all
possible sets of colors of the external edges.  See Figure \ref{skeincolor} for a proof.

\begin{figure}
\center{\includegraphics[width=4in]{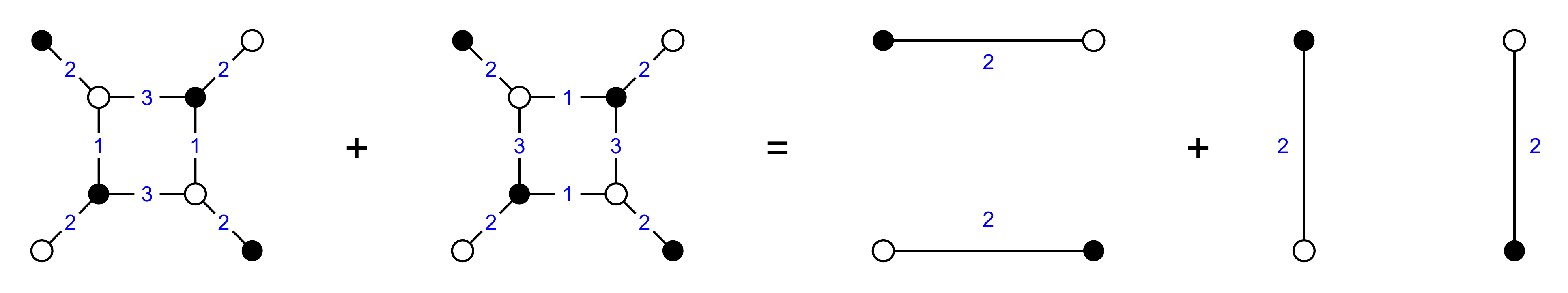}}
\center{\includegraphics[width=2in]{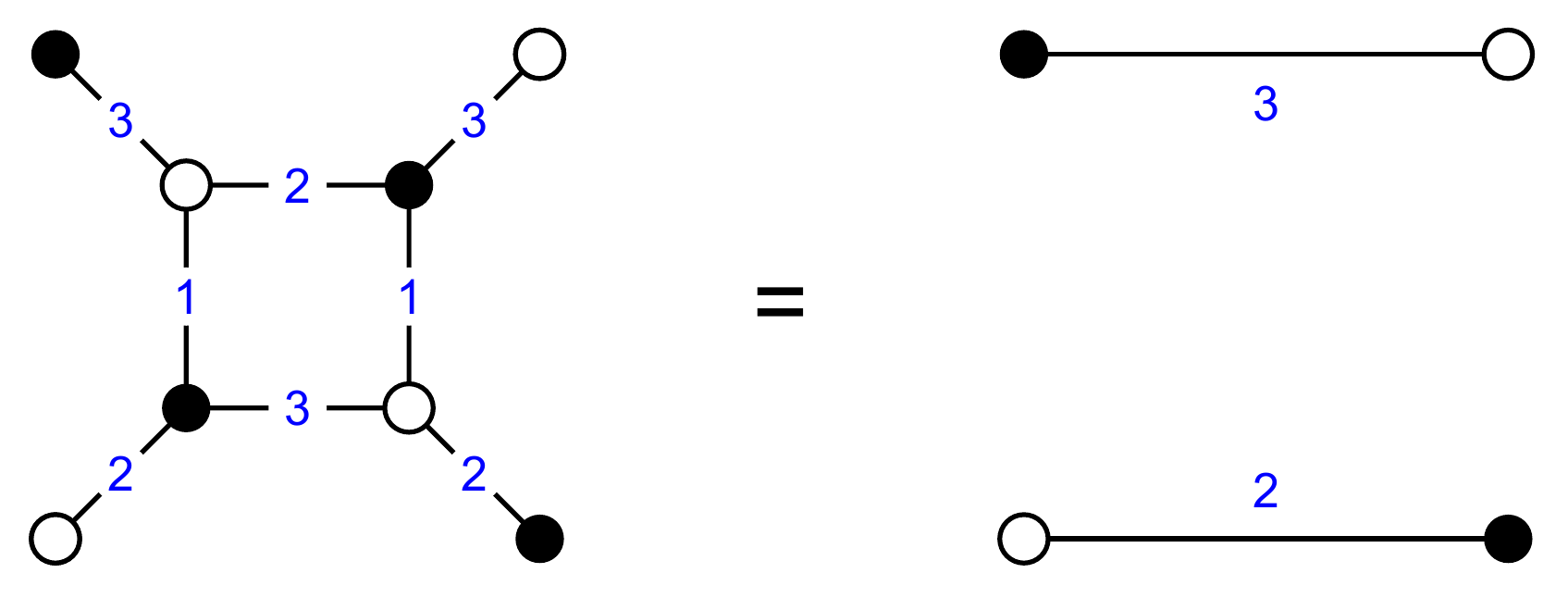}}
\center{\includegraphics[width=4in]{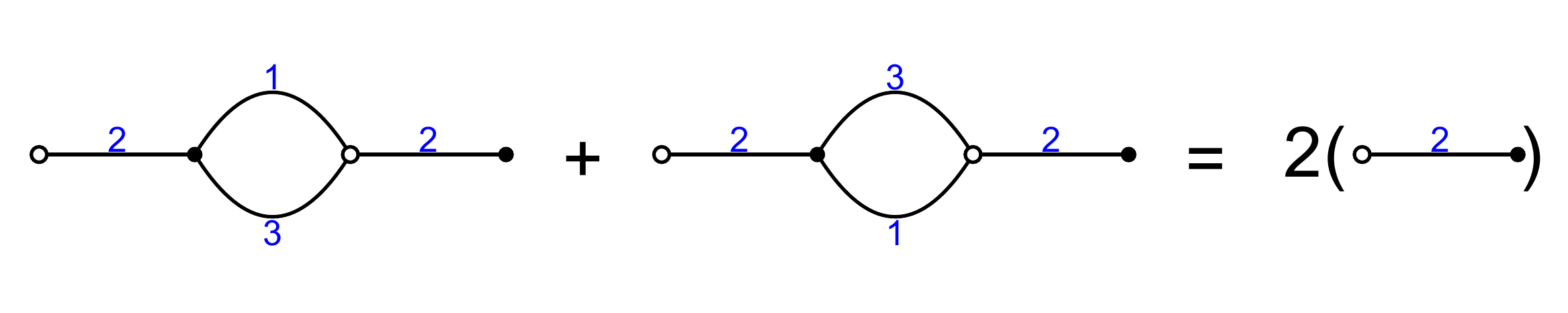}}
%\center{\includegraphics[width=3in]{edgecontract}}
\caption{\label{skeincolor}The skein relations for edge colorings: if all four diagonal edges of a square are the same color (color $2$ in this illustration),
there are two ways to color the edges of the central square face with the two remaining colors, and likewise two possible
reductions on the right-hand side. If the diagonal edges have two colors, the like colors must be adjacent, as shown;
then there is only one way to complete the coloring on the edges of the central square face, and one reduction on the right-hand side.
A $2$-gon can be colored in two ways, and can be replaced with a single edge (joining its neighboring edges into a single edge as shown) with a factor $2$.}
\end{figure}

\subsubsection{Skein relations for multiwebs}\label{mwskein}

We can also define a notion of skein relation for multiwebs in  $\G$. 
A \emph{multiweb skein relation} is an operation that takes a multiweb $m\in\Omega_3(\G)$ to a formal linear combination of other multiwebs in $\G$.  (As above these relations take place locally within contractible disks in the underlying surface $\Sigma$).

\begin{figure}
\center{\includegraphics[width=4.2in]{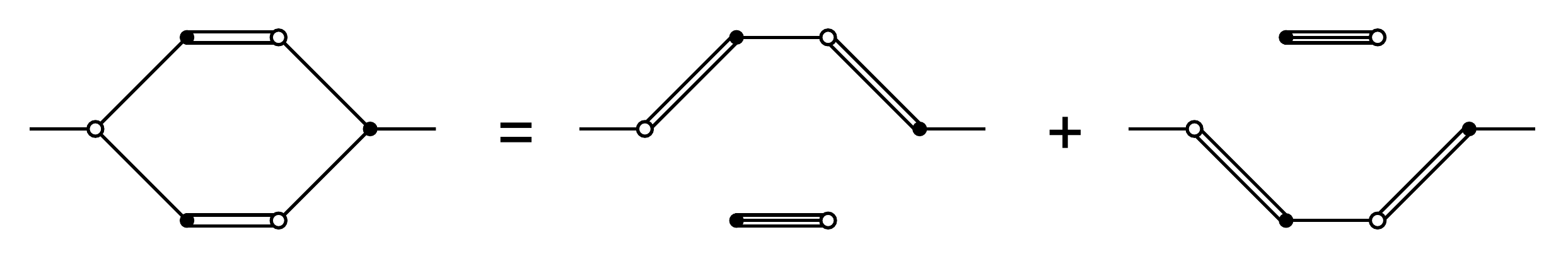}}
\caption{\label{multiwebskein2} Removing a bigon, and replacing with a path in two different ways.}
\end{figure}

\begin{figure}
\center{\includegraphics[width=4.2in]{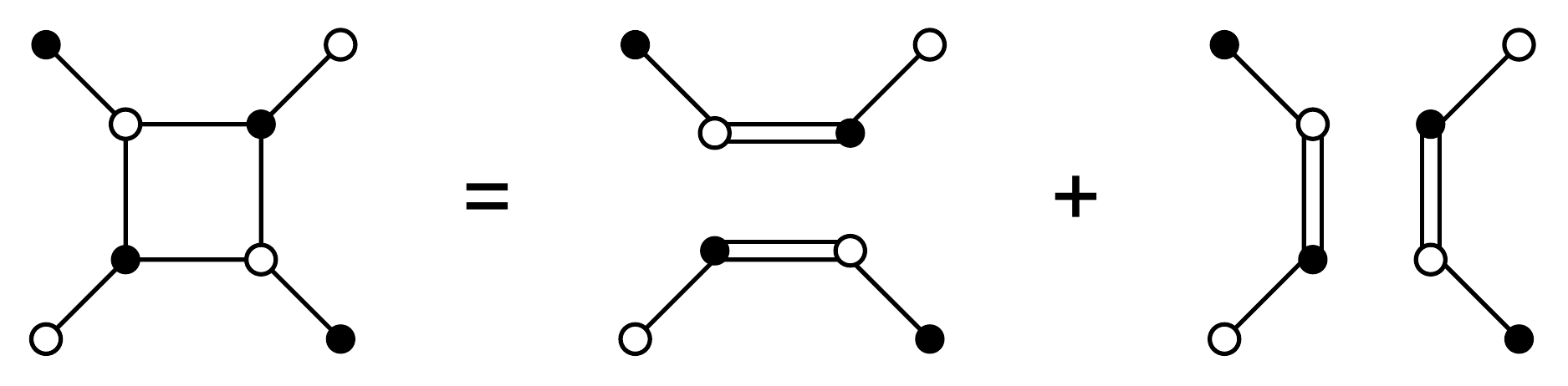}}
\caption{\label{multiwebskein1} Removing a  square.}
\end{figure}

%We can also define a notion of skein relation for multiwebs in a fixed graph $\G$. 
% is as a relation among formal linear combinations of webs in $\G$, viewed as combinatorial/topological objects living on the surface $\Sigma$.  

The type-I, loop-removal, skein relation has a multiweb version which works as follows. Take a 
closed chain (consisting in an alternating sequence of single and double edges); this loop
may surround other vertices of $\G$, but the loop must be contractible in $\Sigma$. Replace the loop with a sequence of tripled edges, by increasing the multiplicity of the doubled edges and decreasing the multiplicity of the single edges along the loop; then multiply the resulting multiweb by $3$. 

The skein relations of type-II and type-III above also
have multiweb versions as shown in Figure \ref{multiwebskein2},\ref{multiwebskein1}. For the type-II, we take two vertices which have two disjoint paths
joining them as shown, not necessarily of the same length.
We replace this ``bigon" (which must be contractible in $\Sigma$) by the two webs shown:  each is obtained by increasing
the multiplicity on every other edge by $1$ and decreasing the multiplicity on the remaining edges.
For type-III, we have four vertices $a,b,c,d$ connected into a cycle by four paths (which may have lengths larger than $1$, unlike as shown). On condition that this cycle is contractible in $\Sigma$, this cycle is replaced by two multiwebs, each again obtained by increasing the multiplicity of every other edge of the cycle by $1$ and decreasing the multiplicity of the other edges of the cycle.

\subsubsection{Basic skein relation for immersed webs}
\label{sssec:basicskein}

Although we will not use this in this paper, there is a more basic skein relation, for \emph{immersed} graphs.
Suppose that $\G$ is immersed in $\Sigma$, that is, drawn on $\Sigma$ allowing edge crossings 
but without crossings at vertices. Web traces are still defined for immersed graphs with choices of cilia,
since the web trace only relies on a linear order at vertices.
The basic skein relation for web-traces among immersed webs is shown in Figure \ref{skeinsmall}. Note that 
we use the signs 
for traces as in Sikora \cite{SikoraTrans01}, using opposite circular orders at black and white vertices (see Section \ref{ssec:graphsinsurfaces}), which differ from those of Kuperberg \cite{Kuperberg96CommMathPhys} (with Sikora's sign convention we get sign-coherence in Theorem \ref{main}).  The type-II and type-III skein relations (Figures \ref{loopremoval} and \ref{skeinsquare}) can be derived from the basic skein relation and the type-I skein relation (Figure \ref{loopremoval3}), see e.g. \cite{Jaeger92DiscreteMath} or by inspection.  The basic skein relation can also be proved by an edge-coloring argument
which we leave to the reader.  
\begin{figure}[htbp]
\center{\includegraphics[width=3.7in]{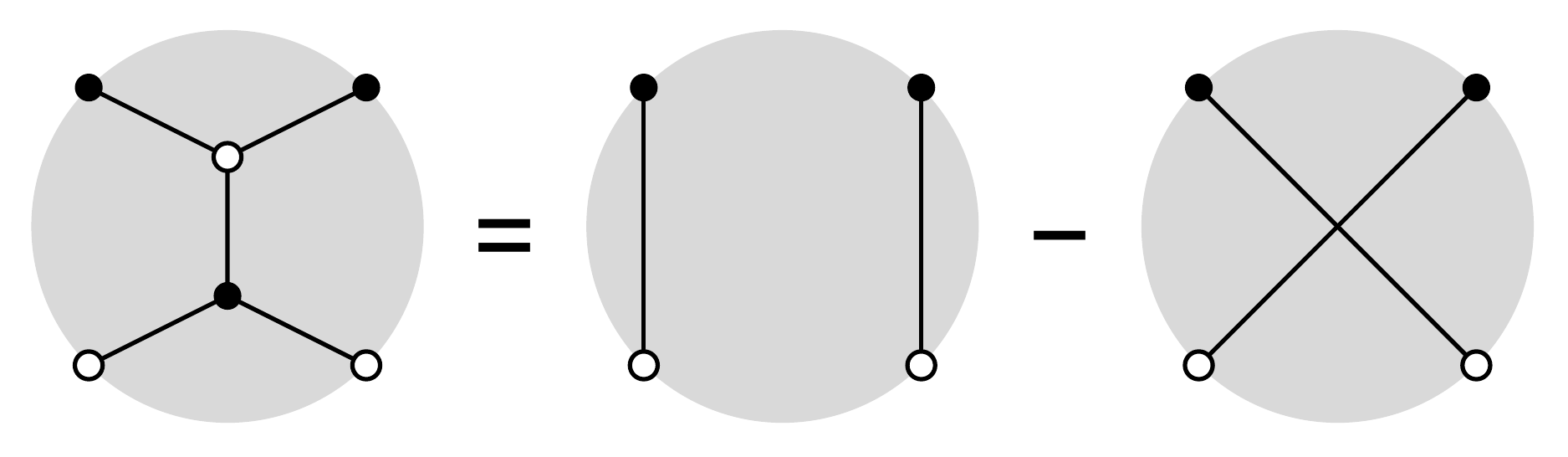}}
\caption{\label{skeinsmall}Basic skein relation for $\SLth$.
}
\end{figure}

\subsection{Planar webs}\label{reducedwebs}

We are ready to combine elements of both Sections \ref{ssec:skeinrelations} and \ref{mwskein}.  We 
suppose $\G$ is embedded in and fills a surface $\Sigma$.

We say that the multiweb $m\in\Omega_3(\G)$ is \emph{reduced} (or \emph{nonelliptic}), and its corresponding web $[m]$ is  
reduced, if $[m]$ has no contractible loops, bigons or quadrilateral faces. Thus in a reduced web all contractible faces have $6$ or more edges.  

Note that $m$ being reduced is a purely topological property, without reference to any connection on the graph.

Given an unreduced multiweb $m$ in $\G$, we can apply multiweb skein relations to $m$ to reduce it to a formal linear combination of reduced multiwebs: $m \mapsto \sum c' m'$.  Such a sequence of reductions is not unique, due to the type-III multiweb skein relation; indeed, it is easy to produce examples of $m$ having more than one quadrilateral face, where different choices of sequences of reductions of the quadrilateral faces give different end ``states'' $\sum c' m'$.  
 
Denote by $\Lambda_3(\G)$ the set of  
reduced webs $[m']$  varying over reduced multiwebs $m'$ in $\G$.  For technical reasons, we need to extend here the notion of equivalence to include an \emph{annulus move} relation identifying reduced webs differing by  oriented boundary loops of an embedded annulus, as in Figure \ref{annulusswap}; this does not affect any of the statements of Section \ref{ssec:websinsurfaces}.  (Essentially, this extra ``isotopy'' relation is an artifact of our working in the 2-dimensional setting of the surface $\Sigma$, rather than the 3-dimensional setting of the thickened surface $\Sigma \times (0, 1)$; compare \cite[Section~5]{SikoraAlgGeomTop07}.)

It was shown in \cite{SikoraAlgGeomTop07, Kuperberg96CommMathPhys} that, although the  end states %$m'$ 
$\sum c^\prime m^\prime$ of reduced multiwebs depend on the sequence of reductions, the formal linear combination $\sum c' [m']$ of the corresponding reduced webs in $\Lambda_3(\G)$ does not.  Also note that if the entire multiweb $m$ lies in a contractible region on $\Sigma$ (for example if $m$ consists only of tripled edges and contractible closed chains), then its reduction will be a positive integer times the class $[\emptyset] \in \Lambda_3(\G)$ of the ``empty'' web.  

\begin{figure}[htb]
\center{\includegraphics[width=2in]{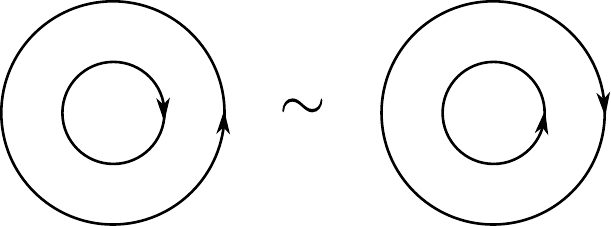}}
\caption{\label{annulusswap} Swapping the boundary loops of an embedded annulus.}
\end{figure}

At the level of traces, let us assume $\G$ is equipped with a flat $\SLth$-connection $\Phi$.  Then $$\Tr(m) = \sum_{\lambda\in\Lambda_3(\G)} C_{m,\lambda}\Tr(\lambda)$$
where $C_{m,\lambda}$ is the number of reduced webs $\lambda \in\Lambda_3(\G)$ ``contained" in $m$, that is, resulting from the reduction of $m$.

Now, for $\Sigma$ a planar surface (Section \ref{planarwebscn}), Theorem \ref{main} writes the determinant $\det\tilde K$  as a sum over unreduced multiwebs $m \in\Omega_3(\G)$. We can further reduce each multiweb to write $\det\tilde K$ as
a sum of traces of reduced webs:

\begin{thm} \label{mainred}Let $\G$ be a bipartite graph $\G$ embedded in and filling a planar surface $\Sigma$, namely
a genus zero surface minus $k\ge0$ disjoint closed disks.
Suppose $\G$ is equipped with a flat
$\SLth(V)$-connection $\Phi$. Then for any choice of Kasteleyn matrix $K$ we have
$$\pm \det\tilde K(\Phi) = \sum_{\lambda\in\Lambda_3(\G)}C_{\lambda}\Tr(\lambda)$$ where the sum is over reduced webs $\lambda\in\Lambda_3(\G)$,
and the $C_\lambda$ are positive integers.
\end{thm}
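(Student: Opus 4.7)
The plan is to combine Theorem~\ref{main} (applied with $n=3$, where the cilia may be suppressed by Proposition~\ref{click}) with the reduction procedure of Sections~\ref{mwskein} and~\ref{reducedwebs}. From Theorem~\ref{main},
$$\pm\det\tilde K(\Phi)\;=\;\sum_{m\in\Omega_3(\G)}\Tr(m).$$
For each $m$, I would iteratively apply the multiweb skein moves of Section~\ref{mwskein}: remove any contractible loop (Type~I, contributing factor $3$), any contractible bigon (Type~II, Figure~\ref{multiwebskein2}, a sum of two multiwebs with coefficient $+1$), or any contractible quadrilateral face (Type~III, Figure~\ref{multiwebskein1}, again a sum of two multiwebs with coefficient $+1$). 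A standard monovariant (e.g.\ counting contractible bigons and quadrilaterals lexicographically) shows the process terminates in a formal $\NN$-linear combination of reduced multiwebs, and by \cite{SikoraAlgGeomTop07, Kuperberg96CommMathPhys} the induced element $\sum_\lambda C_{m,\lambda}[\lambda] \in \NN[\Lambda_3(\G)]$ is independent of the sequence of moves (after the annulus identification of Figure~\ref{annulusswap}).

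Next I would verify that each skein reduction preserves the web-trace, so that
$$\Tr(m)\;=\;\sum_{\lambda\in\Lambda_3(\G)} C_{m,\lambda}\Tr(\lambda),\qquad C_{m,\lambda}\in\NN.$$
Trace invariance under Type~I is immediate from Section~\ref{ssec:skeinrelations}, and under Types~II and~III the tensor/edge-coloring argument of Figure~\ref{skeincolor} is entirely local; since $\Phi$ is flat, on the contractible disk where the move takes place one may gauge the connection to the identity on the affected edges, at which point the web skein relation applies verbatim. Translating from the trivalent graph $\bar m$ of Section~\ref{ssec:websinsurfaces} back to the multiweb $m$ (absorbing tripled edges and alternating chains) gives the multiweb version used above, with positive coefficients as advertised.

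Substituting and interchanging summations yields
$$\pm\det\tilde K(\Phi)\;=\;\sum_{\lambda\in\Lambda_3(\G)} C_\lambda\,\Tr(\lambda),\qquad C_\lambda\;:=\;\sum_{m\in\Omega_3(\G)} C_{m,\lambda}\;\in\;\NN.$$
To see $C_\lambda\geq 1$, recall that by definition $\Lambda_3(\G)$ consists of equivalence classes of reduced multiwebs \emph{in $\G$}; choosing any representative $m_\lambda\in\Omega_3(\G)$ of $\lambda$, no skein moves are required and the trivial reduction gives $C_{m_\lambda,\lambda}=1$, whence $C_\lambda\geq 1$. The main obstacle I expect is a bookkeeping one involving signs: Theorem~\ref{main} determines $\det\tilde K$ only up to an overall sign, and preservation of the trace under Types~II and~III depends on Sikora's sign convention (see the remark after Figure~\ref{skeinsmall}), which was adopted precisely to make the skein signs compatible with those of Theorem~\ref{main}. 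One must check that this compatibility is not disturbed by the passage from single skein moves to iterated multiweb reductions, nor by the annulus identification used to define $\Lambda_3(\G)$; all remaining content of the theorem is then a direct bookkeeping of nonnegative integers.
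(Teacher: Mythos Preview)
Your proposal is correct and follows essentially the same route as the paper: the theorem is not given a separate proof there but is stated as the immediate consequence of combining Theorem~\ref{main} (for $n=3$) with the trace-preserving skein reductions of Section~\ref{ssec:skeinrelations}--\ref{reducedwebs} and the confluence result of \cite{SikoraAlgGeomTop07, Kuperberg96CommMathPhys}. One small point: your proposed monovariant (a lexicographic count of contractible bigons and quadrilaterals) is not obviously monotone, since a Type-III move can create new bigons; the cleaner invariant, implicit in the paper, is the number of trivalent vertices of the associated web $\bar m$, which strictly decreases under every Type-II and Type-III move.
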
  

The most interesting cases are for $k \geq 2$.  Indeed, when $k=0,1$ then $\Lambda_3(\G)$ consists only of the class of the empty web, $[\emptyset]$.

We describe how to extract the coefficients $C_\lambda$ in some simple cases, for $k=2$ and $k=3$,  in the next section.

Due to the dependence on choices when reducing multiwebs,  we do not have at present a canonical probability measure on the subset of $\Omega_3(\G)$ of reduced multiwebs.  However, reduction does induce a canonical probability measure $\nu_3$ on the set of  reduced webs, $\Lambda_3(\G)$.  
It is defined by (recalling~\eqref{Zndtrivial})
\be\label{nu3def}\Pr(\lambda) = \frac{C_{\lambda} \Tr(\lambda, I)}{Z_{3d}}\ee
where $\Tr(\lambda, I)$ is the web-trace of $\lambda\in\Lambda_3(\G)$ for the identity connection.

This is in contrast to the case of $\mathrm{SL}_2$, where $\nu_2$ is defined on the subset of $\Omega_2(\G)$ of reduced $2$-multiwebs, namely chains of single edges, and doubled edges.  Here, the only $2$-multiweb  skein relation resolves a contractible chain, in two different ways, into a sequence of doubled edges.

\section{Multiwebs on simple surfaces}

We continue studying the case $n=3$.
Sikora and Westbury \cite[Theorem 9.5]{SikoraAlgGeomTop07} showed that reduced webs form a basis for the ``$\mathrm{SL}_3$-skein algebra" for any surface. That is, using skein relations any web can be reduced to a unique linear combination of non-elliptic webs.
Thus \cite[Theorem 3.7]{SikoraTrans01}, the traces for nonelliptic webs form a basis for the algebra of invariant regular functions on the space of flat  $\mathrm{SL}_3$-connections modulo gauge
(the ``$\mathrm{SL}_3$-character variety").

\subsection{Annulus}\label{annulussection}

We consider here the case 
where the graph $\G$ is embedded on an annulus.
The result of \cite{SikoraAlgGeomTop07} for the annulus can be stated as follows.

\begin{prop}\label{planarred}
By use of skein relations of types I, II, and III, any web on an annulus with a flat $\SLth$-local system
can be reduced to a unique positive integer linear combination
of collections of disjoint noncontractible oriented cycles.
\end{prop}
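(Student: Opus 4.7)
The plan is to reduce any web on the annulus to a unique positive-integer linear combination of reduced webs via standard skein theory, and then to classify those reduced webs as disjoint unions of non-contractible oriented simple closed curves via an Euler-characteristic argument. For the reduction, I would iteratively apply the skein relations of types I, II, and III (together with their multiweb variants from Section~\ref{mwskein}) to the input web. Each relation expresses its input as a positive-integer linear combination of simpler webs and strictly decreases the combinatorial complexity of the underlying graph, so the process terminates. Independence of the final linear combination from the order of reductions is guaranteed by \cite{SikoraAlgGeomTop07, Kuperberg96CommMathPhys}, giving a unique expression of any web as a positive-integer linear combination of reduced webs $\lambda \in \Lambda_3(\G)$.

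To classify reduced webs on the annulus $A$, let $w$ be reduced and decompose $w = L \sqcup w_0$, where $L$ is the union of components without vertices (these are non-contractible by reducedness) and $w_0$ consists of components containing at least one trivalent vertex. It suffices to show $w_0 = \emptyset$. Suppose for contradiction that $w_0$ has $V \geq 1$ vertices and $E \geq 1$ edges, so $2E = 3V$. Any disk face of $w_0$ cannot contain an isolated loop of $L$ (such a loop would lie in a disk and thus be contractible, violating reducedness), so disk faces of $w_0$ coincide with those of $w$ and hence have at least $6$ edge-sides by the reduced condition.

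Applying the Euler characteristic formula $V - E + \sum_f \chi(f) = \chi(A) = 0$ to the faces of $w_0$ in $A$, each of which is planar with $\chi(f) = 2 - b_f$ (for $b_f$ the number of boundary components of $f$), one derives $F_d = E/3 + \sum_{b_f \geq 2}(b_f - 2)$ where $F_d$ is the number of disk faces. Combining this with $\sum_f d(f) = 2E$ and the bound $d(f) \geq 6$ on disk faces yields
\begin{equation*}
2E \;\geq\; 6F_d + \sum_{b_f \geq 2} d(f) \;=\; 2E + 6 \sum_{b_f \geq 2}(b_f - 2) + \sum_{b_f \geq 2} d(f),
\end{equation*}
forcing both nonnegative sums on the right to vanish. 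The two boundary circles of $A$ must lie in face(s) of $w_0$: if they share a face, that face must additionally have a graph boundary component (since $w_0 \neq \emptyset$), giving $b_f \geq 3$ and contradicting $\sum(b_f - 2) = 0$; if they lie in two distinct faces, each has $b_f \geq 2$ and at least one graph-edge side, contradicting $\sum d(f) = 0$. Either way we reach a contradiction, so $w_0 = \emptyset$, and $w = L$ is a disjoint union of non-contractible simple closed curves, each carrying a canonical orientation inherited from the alternating $1$-$2$ chain structure of its underlying multiweb (Section~\ref{loop}). The main obstacle I expect is the careful topological bookkeeping for faces with multiple boundary components, in particular the interaction between the two $\partial A$-circles and the graph; once this is handled, the Euler-characteristic argument is rigid.
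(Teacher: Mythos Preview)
Your proof is correct and follows essentially the same Euler-characteristic strategy as the paper. The paper's version is terser: it caps the annulus off to a sphere and applies the formula $6 = 2n_2 + n_4 - n_8 - \cdots$ for a connected trivalent bipartite graph there, observing that at most two faces are punctured, so a contractible bigon or quad must exist whenever the web has vertices; you instead work directly on the annulus and carefully track faces with multiple boundary components, arriving at the contrapositive statement that a reduced web can have no trivalent vertices. The two computations are equivalent.
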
  

\begin{proof} For a bipartite connected trivalent graph on a sphere
we have (by Euler characteristic) 
\be\label{EC}6=2n_2+n_4+0n_6 - n_8+\dots+(3-k)n_{2k}+\dots\ee
where $n_k$ is the number of faces of degree $k$. When $\G$ is embedded on an annulus (with no boundary points)
at most two faces contain boundary components, so
there is at least one contractible degree-$2$ face or $2$ contractible degree-$4$ faces.
We first perform type-$2$ skein relations to remove any contractible degree-$2$ faces.
Then there remain at least $2$ faces of degree $4$, upon which we can perform a type-$3$ skein reduction.
This reduces the number of vertices in $\G$ and possibly disconnects $\G$; 
continue with each component until each component is a loop.
\end{proof}

Let $W_{j,k}$ be the set of isotopy classes (allowing loop-swapping isotopies as in Figure \ref{annulusswap})
of reduced webs on $\Sigma$ with $j$ loops of homology class $+1$ (counterclockwise)  and $k$ loops of homology class $-1$ (clockwise). (We orient a $3$-multiweb which is a loop so that the single edges are oriented from black to white,
and the doubled edges are oriented from white to black. See Section \ref{loop}.)

Suppose $\Phi$ is a flat connection with monodromy
$A\in\SLth$ around the generator of $\pi_1$. 
%We can choose the $\phi_{wb}$ to be trivial except on a zipper (path in the dual graph) between the two boundary components,
% as in Figure \ref{sqweb}. The edges crossing the zipper (the dashed blue line) have parallel transport $A$ from left to right. 
A noncontractible simple loop has trace $\tr(A)$ or $\tr(A^{-1})$ depending on orientation. 
For a general web $m$, by Proposition \ref{planarred}, $\Tr(m)$ will be a polynomial in $\tr(A)$ and $\tr(A^{-1})$ whose coefficients are nonnegative integers: 
$$\Tr(m) = \sum_{j,k\ge0}M_{j,k}\tr(A)^j\tr(A^{-1})^k$$
where $M_{j,k}$ counts reduced subwebs in $W_{j,k}$.

\begin{figure}
\center{\includegraphics[width=2.5in]{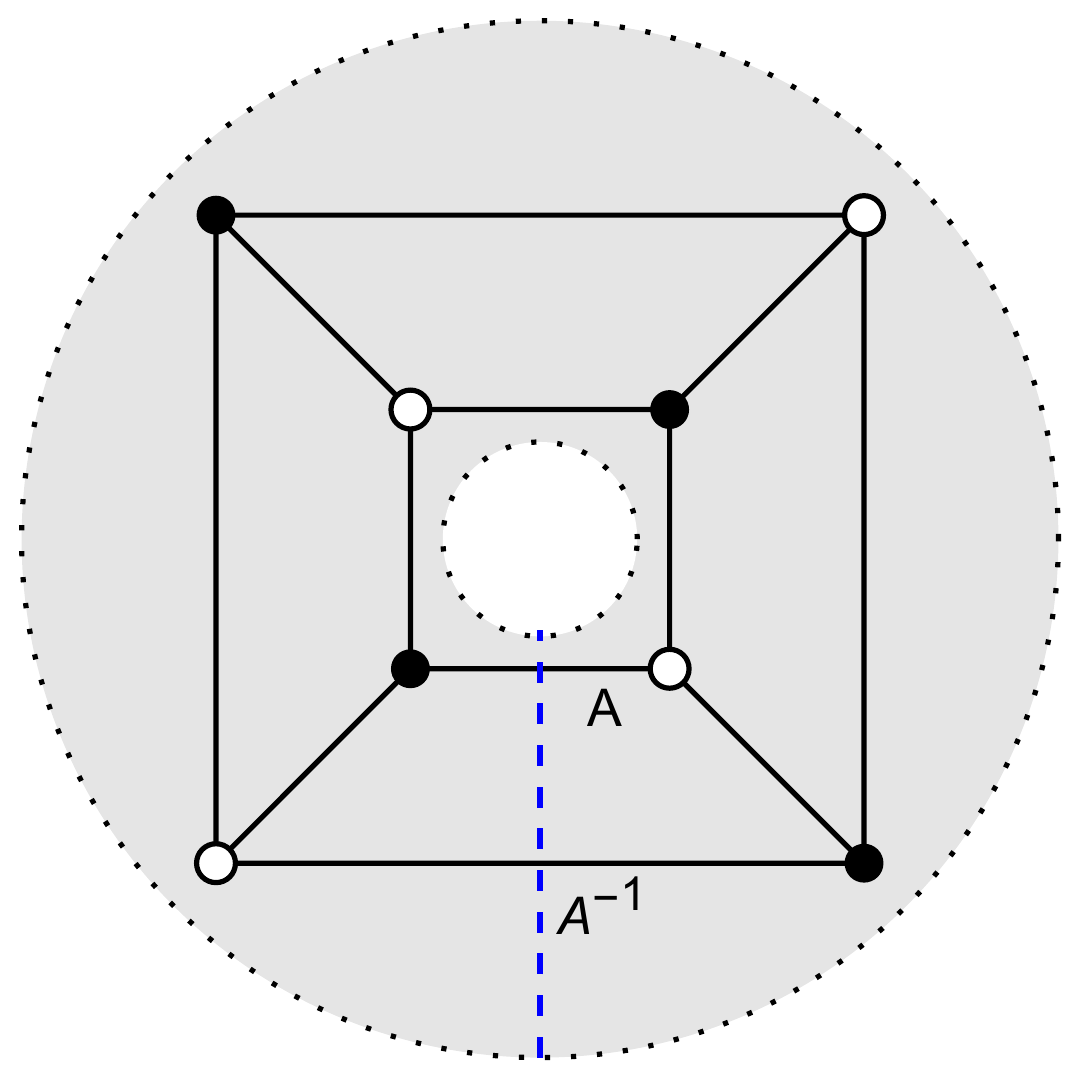}}
\caption{\label{sqweb} A web on an annulus with flat connection having monodromy $A$. We can for example
put parallel transports $A, A^{-1}$ as marked and $I$ on the remaining edges.}
\end{figure}
For example for the web of Figure \ref{sqweb}
the  trace is 
$$\Tr(m) = 15+\tr(A)\tr(A^{-1}).$$

By Theorems \ref{main} and \ref{mainred}, for a graph $\G$ on an annulus we have  
$$\pm \det \tilde K(A) =\sum_{m\in\Omega_3(\G)}\Tr(m) = \sum_{\lambda\in\Lambda_3(\G)}C_\lambda \Tr(\lambda) = \sum_{j,k\ge0} C_{j,k}\tr(A)^j\tr(A^{-1})^k$$
where $C_{j,k}$ is the total contribution of all the $M_{j,k}$'s.  

We can compute $C_{j,k}$ concretely as follows. Assume $A$ has eigenvalues $x,y,z$ (with $xyz=1$).  
Let $u=\frac13Tr(A),v=\frac13Tr(A^{-1})$. Then $x,y,z$ are roots of $p(\lambda) = \lambda^3-3u\lambda^2+3v\lambda-1$. 
We can assume without loss of generality that $A$ is a diagonal matrix with eigenvalues $x,y,z$. Then if we rearrange rows and columns of $\tilde K$ so that rows and columns of index $1\bmod 3$ come first, then $2\bmod 3$ then $0\bmod 3$, the new matrix is of block form 
$$S^{-1}\tilde KS = \begin{pmatrix}K(x)\\&K(y)\\&&K(z)\end{pmatrix}$$
where the $K(\cdot)$ are the corresponding scalar matrices.
Thus 
\be\label{tripleK}\det\tilde K = \det K(x) \det K(y)\det K(z)\ee
This is a symmetric polynomial of $x,y,z$ and so can be written as a polynomial in $u,v$. 

As a concrete example,
take a $2m\times n$ square grid $\G_{2m,n}$ on an annulus (with circumference $2m$, with $m$ odd, and height $n$), see Figure \ref{gridannulus}.
By Proposition \ref{anndet} in the appendix, for $n$ even, 
$$\det K_{2m,n}(x) = \pm \prod_{j=1}^{n/2}\frac{(x+\alpha_j^{2m})(x+\alpha_j^{-2m})}{x}$$
where $\alpha_j=-\cos\theta_j+\sqrt{1+\cos^2\theta_j}$ and $\theta_j=\frac{\pi j}{n+1}$.
Note $(x+r)(y+r)(z+r)= 1+3vr+3ur^2+r^3.$ Hence using (\ref{tripleK}),
\be\label{Kprod1}\det\tilde K = \pm\prod_{j=1}^{n/2}(1+3v\alpha_j^{2m}+3u\alpha_j^{4m}+\alpha_j^{6m})(1+3v\alpha_j^{-2m}+3u\alpha_j^{-4m}+\alpha_j^{-6m}).\ee

Now we have
$$P(u,v) :=\frac{\det\tilde K(u,v)}{\det\tilde K(1,1)}=\sum_{p,q\geq 0} c_{p,q}u^pv^q,$$ 
where $c_{p,q}$ is the probability, for $\nu_3$, of a reduced web of type $p,q$ (see (\ref{nu3def})).
Thus $P(u,v)$ is the probability generating function for $\nu_3$. 
Each factor $1+Au+Bv+C$ in (\ref{Kprod1}), which is a polynomial in $u,v$ with nonnegative coefficients, can be scaled so that its coefficients add to $1$.
It is then the probability generating function for a distribution on $\Z^2$ with support $\{(0,0),(0,1),(1,0)\}$.  Thus $P(u,v)$ 
itself can be interpreted as the probability generating function for an $n$-step random walk in $\Z^2$ starting from $(0,0)$, 
$$(X,Y)=(X_1,Y_1)+(X_{-1},Y_{-1}) + (X_2,Y_2) +\dots+(X_{-n/2},Y_{-n/2})$$
where for $j>0$ the step $(X_j,Y_j)$ is $(0,0),(0,1),(1,0)$ with probabilities respectively
$$\frac{1+\alpha_j^{6m}}{1+3\alpha_j^{2m}+3\alpha_j^{4m}+\alpha_j^{6m}}, \frac{3\alpha_j^{2m}}{1+3\alpha_j^{2m}+3\alpha_j^{4m}+\alpha_j^{6m}}, \frac{3\alpha_j^{4m}}{1+3\alpha_j^{2m}+3\alpha_j^{4m}+\alpha_j^{6m}}$$
and for $j<0$,  the step $(X_j,Y_j)$ is $(0,0),(0,1),(1,0)$ with probabilities respectively
$$\frac{1+\alpha_j^{6m}}{1+3\alpha_j^{2m}+3\alpha_j^{4m}+\alpha_j^{6m}}, \frac{3\alpha_j^{4m}}{1+3\alpha_j^{2m}+3\alpha_j^{4m}+\alpha_j^{6m}}, \frac{3\alpha_j^{2m}}{1+3\alpha_j^{2m}+3\alpha_j^{4m}+\alpha_j^{6m}}.$$

We have the mean values
$$\bar X=\bar Y= \sum_{j=1}^{n/2} \frac{3\alpha_j^{2m}+3\alpha_j^{4m}}{1+3\alpha_j^{2m}+3\alpha_j^{4m}+\alpha_j^{6m}}= \sum_{j=1}^{n/2} \frac{3\alpha_j^{2m}}{(1+\alpha_j^{2m})^2}.$$
These expected values can be interpreted as the expected value of the number of nontrivial loops of each orientation on the annulus.

Now let $m,n$ tend to $\infty$ with $m/n\to\tau$. 
Let us estimate $\bar X,\bar Y$ for $m,n$ large. We only get a nontrivial contribution if $|\alpha_j|\approx 1$, that is, when $j\approx n/2$.   
We can write (letting $\ell=n/2-j$)
$$\alpha_\ell= 1-\frac{\pi(\ell+\frac12)}{n} + O(\frac{\ell^2}{n^2})$$
%(because $\alpha_{j=n/2} \approx 1-\frac{\pi}{2n}$ and $\frac{d\alpha}{dj}(n/2) \approx \frac{\pi}{n}$)
and so
$$\alpha_\ell^{2m} = q^{2\ell+1}(1+o(1))$$
where $q=e^{-\pi\tau}$.
We thus have, up to errors tending to zero as $m,n\to\infty$,
\be\label{mean}\bar X = \bar Y = \sum_{\ell=0}^{\infty}\frac{3q^{2\ell+1}}{(1+q^{2\ell+1})^2}.\ee

Now suppose $\tau$ is large: we have a long thin annulus. Then $q$ is small.
From (\ref{mean}), since $X,Y$ are nonnegative $\Z$-valued, 
we have $$\frac{\det\tilde K(u,v)}{\det\tilde K(1,1)}=1+o_q(1).$$
Thus the probability of having zero crossings tends to $1$ as $q\to0$. 

The probability $c_{j,k}$ of a $(j,k)$-crossing (a reduced 
web containing exactly $j$ and $k$ nontrival loops in each orientation after reduction) for $j,k\ge0$
is to leading order $q^{\delta_{j,k}}3^{j+k}$ where the ``crossing exponent" is
$\delta_{j,k} = \lceil \frac{2(j^2+jk+k^2)}3\rceil$. This can be seen by expanding (\ref{Kprod1}) and extracting the appropriate term to leading order
(see the calculation in the appendix, Section \ref{extract}.)

A similar computation can be done for an $n$-multiweb on an annulus, for $n>3$, since \cite{DanandTom} shows that an $n$-multiweb on an annulus can be reduced to a collection of loops (of $n-1$ types: with multiplicities $(k,n-k)$ for $k=1,\dots,n-1$).

\subsection{Pair of pants}\label{pantssection}

\begin{figure}
\begin{center}
\includegraphics[width=4cm]{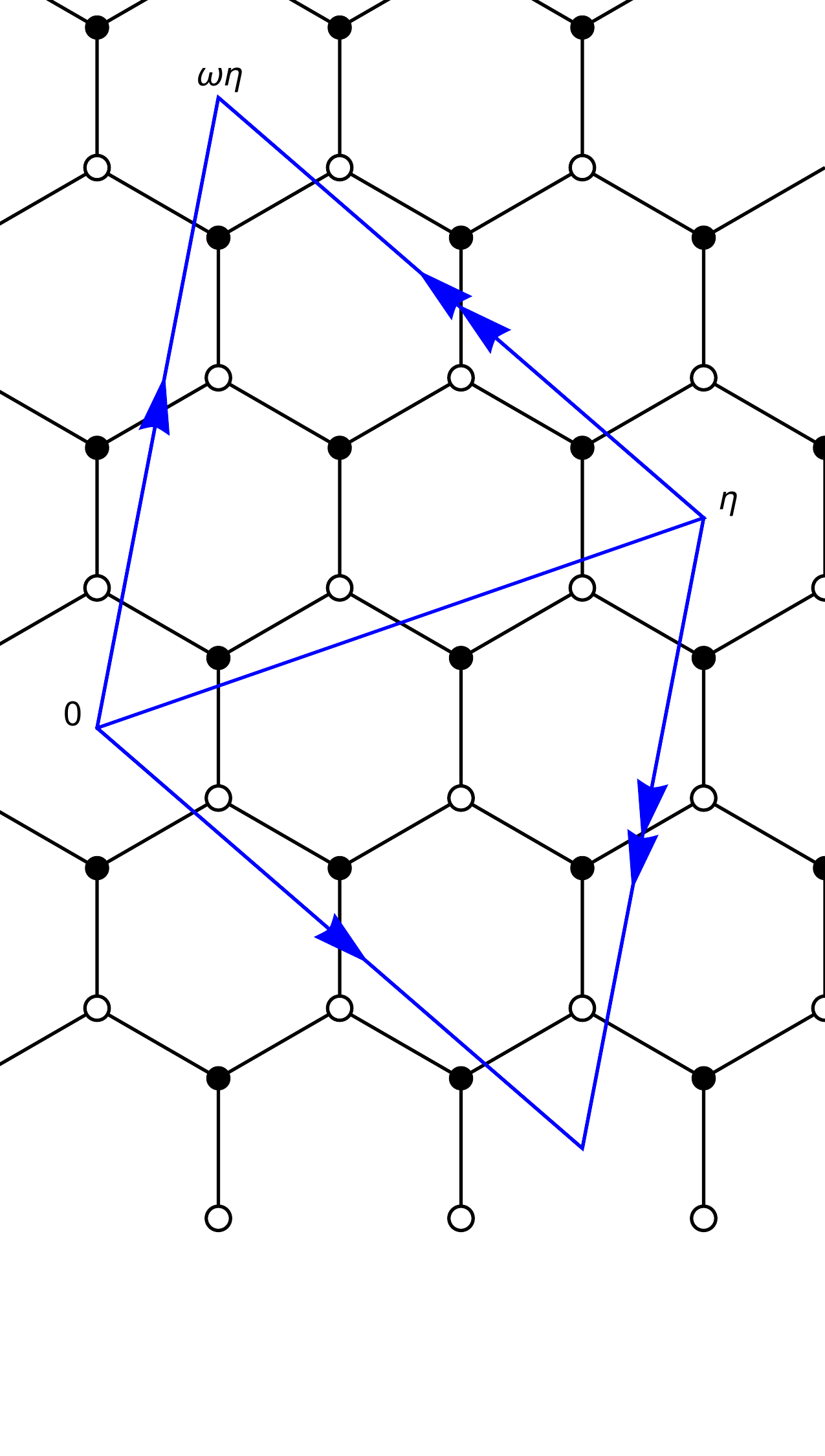}
\end{center}
\caption{\label{pantswebs} Non-elliptic webs on a pants. Shown is $W_\eta$ for $\eta=2+\omega$.}
\end{figure}

Let $\Sigma$ be a pair of pants, that is, a sphere with three holes.
Let $\omega=e^{\pi i/3}$ and let $\eta\in\C$ have the form $\eta=a+b\omega$ where $a,b\in\Z_{\ge0}$.
We construct a reduced web $W_{\eta}$ on $\Sigma$ as follows; see Figure \ref{pantswebs}.
We take the rhombus with sides $\eta\omega^{-1}$ and $\eta\omega$ in $\C$, and glue sides as shown, putting punctures at the corners, to form a $3$-punctured sphere $\Sigma$.
The image
of the standard honeycomb graph (dual to the regular triangulation of $\C$) descends to a reduced web on $\Sigma$.

\begin{prop}
A reduced web on a pair of pants is a union of a collection of loops and at most one of the webs $W_\eta$. 
\end{prop}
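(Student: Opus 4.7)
The plan is to separate the reduced web $W$ into its loop components and its ``trivalent'' part, then use an Euler characteristic argument combined with the reducedness hypothesis to force the trivalent part into a rigid honeycomb structure, which we identify with some $W_\eta$.

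First I would split $W = W_{\mathrm{loop}} \sqcup W_\ast$, where $W_{\mathrm{loop}}$ is the union of the pure loop components (each of which is necessarily homotopically nontrivial in $\Sigma$, since reducedness forbids contractible loops), and $W_\ast$ is the union of components containing at least one trivalent vertex. The loop components may be collected into the ``loops'' factor of the desired decomposition, so the task reduces to showing that $W_\ast$ is either empty or is a single component isotopic to $W_\eta$ for some $\eta = a + b\omega$ with $a,b \in \Z_{\geq 0}$.

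Next I would apply Euler's formula to $W_\ast \subset \Sigma$, where $\chi(\Sigma) = -1$. Let $V, E$ denote the number of vertices and edges of $W_\ast$, and let $F$ be the number of components of $\Sigma \setminus W_\ast$. Trivalence gives $2E = 3V$, and $V - E + F = -1$ yields $F = V/2 - 1$. Every face is either contractible (so by reducedness has at least $6$ sides) or contains at least one of the three punctures. Writing $2E = \sum_f \deg(f)$ and combining with the face count produces a very tight budget: every contractible face must be a hexagon, each puncture must lie in its own face, and the total boundary degree around the three punctures is bounded. In particular $W_\ast$ is a trivalent bipartite graph all of whose bounded complementary regions are hexagonal, with precisely three punctured regions of controlled perimeter.

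The main obstacle is the third step: identifying this constrained $W_\ast$ with one of the webs $W_\eta$. I would develop $W_\ast$ into the universal cover $\widetilde{\Sigma}$ of the pair of pants; because every contractible face of the lift is a hexagon and the graph is trivalent bipartite, the lift sits inside the standard hexagonal tiling of $\C$. The three punctures of $\Sigma$ correspond to three orbits of ``special'' lattice points where the lift has its punctured faces, and the $\pi_1(\Sigma)$-action realizes the gluing of a rhombus with sides $\eta\omega^{-1}$ and $\eta\omega$ described before the statement, where $\eta = a + b\omega$ records the dimensions of the fundamental domain in the two honeycomb directions. This identifies $W_\ast$ with $W_\eta$. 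Finally, the connectedness of $W_\ast$ follows from the fact that any additional trivalent component would introduce extra punctured faces or violate the hexagon budget established in Step~2, so there is at most one $W_\eta$. The delicate part is the equivariant embedding of the honeycomb lift in $\widetilde{\Sigma}$ and the verification that the only admissible deck-group actions are parameterized by $\eta \in \Z_{\geq 0}[\omega]$.
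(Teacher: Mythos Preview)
Your decomposition into loops and a trivalent piece, followed by an Euler-characteristic count, is the same outline as the paper. Two points need correction.

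The Euler computation in step~2 is off: the equality $V-E+F=\chi(\Sigma)=-1$ would require every complementary region to be a disk, but the regions containing punctures are annuli. The fix (which the paper uses) is to fill the three punctures and work on $S^2$, where $V-E+F=2$, equivalently $\sum_f(6-\deg f)=12$. Since contractible faces contribute nonpositively by reducedness and every face has even degree $\ge 2$, this forces exactly three punctured faces, each a \emph{bigon}, with every contractible face a hexagon. With your number $F=V/2-1$ the inequality $\sum_f\deg f\ge 6F$ gives $3V\ge 3V-6$, which is vacuous, so no ``tight budget'' actually emerges.

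The substantive gap is step~3. Lifting to the universal cover $\widetilde\Sigma$ of the pair of pants does not yield the honeycomb tiling of $\C$: the deck group $\pi_1(\Sigma)$ is free on two generators, not a lattice, so it cannot act by honeycomb translations, and the bigon faces around the punctures lift to infinite strips rather than hexagons. The paper instead passes to the \emph{dual} of $W_\ast$ on the filled-in sphere: a triangulation with all vertices of degree~$6$ except three of degree~$2$ (the duals of the bigons). Giving each triangle the equilateral metric yields a flat $(3,3,3)$-orbifold; its threefold branched cover over the three cone points is an equilateral torus, hence a quotient of the standard triangular lattice in $\C$ by a hexagonal sublattice, and such sublattices are indexed by Eisenstein integers $\eta=a+b\omega$. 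This orbifold/branched-cover identification is the missing ingredient.
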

\begin{proof}
If the web has a bigon or quad face not containing a boundary component, it is not reduced.  If any boundary face is not
a bigon, there must be (by (\ref{EC})) a non-boundary quad face, so it is not reduced. So if reduced, all boundary faces are bigons, and all other
faces are hexagons, again by (\ref{EC}). 
The dual of such a web is a triangulation with all vertices of degree $6$ except for three vertices of degree $2$. Geometrically (replacing triangles with equilateral triangles) it is a $(3,3,3)$-orbifold. Such a space has a $3$-fold branched cover (over the vertices of degree $2$) which is an equilateral torus. It is a quotient of the regular hexagonal
triangulation by a hexagonal sublattice. So these triangulations are indexed by Eisenstein integers $\eta=a+be^{\pi i/3}$,
where $a,b\in\Z_{\ge0}$,
and $a^2+ab+b^2$ is the number of white vertices (or black vertices).
\end{proof}

Note that there are two possible orientations of each $W_\eta$ (obtained from switching the colors), the other denoted $W_\eta^*$, but at most one can occur in any reduced web.

Letting $A,B,C=AB^{-1}$ be the monodromies around the punctures of a flat connection on $\Sigma$, we have
$$\det\tilde K = \sum_{i_1,i_2,j_1,j_2,k_1,k_2,\eta}C_{\vec{i}}\tr(A)^{i_1}\tr(A^{-1})^{i_2}\tr(B)^{j_1}\tr(B^{-1})^{j_2}\tr(C)^{k_1}\tr(C^{-1})^{k_2}\Tr(W_\eta)$$
for some integers $C_{\vec{i}}=C_{i_1,i_2,j_1,j_2,k_1,k_2,\eta}$. 

While extracting the coefficients $C_{\vec{i}}$ can be done in principle (by the result of Sikora-Westbury mentioned at the beginning of this section), in practice it is not easy.
\medskip

\noindent{{\bf Open Problem:} Extract the coefficients $C_{\vec i}$ in the above expression.}
\medskip

Let us only consider one simplified situation.

Suppose a bipartite graph $\G$ is embedded on $\Sigma$, and two of the three boundary components of the pants are in \emph{adjacent} faces of $\G$. Then subwebs of $\G$ of type $W_\eta$ can only occur if $\eta=1$,
that is, except for loop components a reduced web $W$ in $\G$ can only be a $W_1$ (or a $W_1^*$, depending on the orientation of the edge between the adjacent faces); such a web is homeomorphic to a theta graph, see Section \ref{sssec:thetagraph}.  

For the identity connection we then have 
$\det\tilde K = Z_0+Z_1\Tr(W_1) $, where $Z_0$ is the weighted sum of multiwebs not containing
$W_1$ in their reduction,  and $Z_1$ is the weighted sum of reduced subwebs containing
a component of type $W_1$. 
We can compute $Z_0,Z_1$ as follows.

Suppose we impose a flat connection with monodromy $A,B$ around the generators of $\pi_1$, where $A,B$
are chosen so that traces of simple loops are $3$ and the trace of a $W_1$ is a variable.  
For example $A=\begin{pmatrix}1&a&1\\0&1&1\\0&0&1\end{pmatrix}$ and $B=\begin{pmatrix}1&0&0\\1&1&0\\-a&-a^2&1\end{pmatrix}$ so that $Tr(AB)=3-a^2$.

Note that $Tr(W_1)=Tr(A)Tr(B)-Tr(AB)=6+a^2$ and the traces of $A,B,C$ and their inverses do not involve $a^2$. So $Z_1$ is the coefficient of $a^2$ in $\det \tilde K$.

\section{Appendix: annulus determinant}

\begin{prop}\label{anndet} For the grid graph $\G_{2m,n}$ on an annulus as in Figure {\upshape\ref{gridannulus}} with $n$ even and $m$ odd we have 
$$\det K_{2m,n}(z) =  \pm\prod_{k=1}^{n/2}\frac{(z+\alpha_k^{2m})(z+\alpha_k^{-2m})}{z}$$
where  $\alpha_k=-\cos\theta_k+\sqrt{1+\cos^2\theta_k}$ with $\theta_k=\frac{\pi k}{n+1}$.
If $n$ is odd and $m$ is odd the result is 
$$\det K_{2m,n}(z) =  \pm(\sqrt{z}+\frac1{\sqrt{z}})\prod_{k=1}^{(n-1)/2}\frac{(z+\alpha_k^{2m})(z+\alpha_k^{-2m})}{z}$$
with $\alpha_k$ as above.
\end{prop}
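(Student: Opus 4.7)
The plan is to compute $\det K_{2m,n}(z)$ by Fourier-diagonalizing the Dirichlet boundary conditions in the vertical direction and then evaluating the remaining cyclic horizontal problem by a transfer matrix. First I would make $K_{2m,n}(z)$ explicit: by Proposition \ref{indep} I may use $\SLth(\R)$-gauge freedom to concentrate the monodromy of the diagonal connection $\operatorname{diag}(x,y,z)$ on a single vertical ``seam'' of the annulus, so that $K_{2m,n}(z)$ has entries $\pm 1$ on every horizontal and vertical edge (Kasteleyn signs) and $\pm z$ on the $n$ seam edges. The hypothesis that $m$ is odd is exactly what makes a globally consistent choice of Kasteleyn signs possible on the annulus, since the non-contractible cycle of length $2m$ must satisfy the appropriate parity condition.

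Next I would diagonalize vertically. The vertical structure of $\G_{2m,n}$ consists of paths of length $n$ with Dirichlet boundary conditions, and the sine functions $\psi_k(j) = \sin(j \theta_k)$ with $\theta_k = \pi k/(n+1)$, $k = 1, \ldots, n$, simultaneously diagonalize its adjacency. In the sine basis $K_{2m,n}(z)$ decomposes as a block-diagonal sum $\bigoplus_{k=1}^{n} K^{(k)}(z)$, where each $K^{(k)}(z)$ is an $m \times m$ cyclic bipartite matrix encoding a horizontal cycle of length $2m$ with one marked edge of weight $z$ and a horizontal hopping whose transfer matrix $T_k$ has characteristic equation $\lambda - \lambda^{-1} = -2\cos\theta_k$, so that $T_k$ has eigenvalues $\alpha_k^{\pm 1}$. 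Spreading the seam weight $z$ symmetrically around the cycle by a diagonal conjugation, a direct computation gives, up to sign,
\begin{equation*}
    \det K^{(k)}(z) = \pm\, z^{-1/2}\, \alpha_k^{-m}\, (z + \alpha_k^{2m}).
\end{equation*}

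The symmetry $\theta_{n+1-k} = \pi - \theta_k$ yields $\alpha_{n+1-k} = \alpha_k^{-1}$, so the modes $k$ and $n+1-k$ pair up to contribute
\begin{equation*}
    \bigl(z^{-1/2} \alpha_k^{-m}(z+\alpha_k^{2m})\bigr)\bigl(z^{-1/2} \alpha_k^{m}(z+\alpha_k^{-2m})\bigr) = \frac{(z+\alpha_k^{2m})(z+\alpha_k^{-2m})}{z}.
\end{equation*}
For $n$ even the $n$ modes partition into $n/2$ such pairs, recovering the first formula. For $n$ odd the central mode $k = (n+1)/2$ is self-paired with $\theta_k = \pi/2$ and $\alpha_k = 1$, so it contributes only the single unpaired factor $z^{-1/2}(z+1) = \sqrt{z} + z^{-1/2}$, yielding the second formula.

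The main obstacle I anticipate is the sign and normalization bookkeeping: verifying that the Kasteleyn sign rule extends consistently to the annulus precisely when $m$ is odd, tracking how the bipartite pairing of sine eigenvectors interacts with the horizontal cyclic translation, and pinning down the precise prefactor $z^{-1/2} \alpha_k^{-m}$ per block so that the factor of $1/z$ in the stated formula emerges correctly rather than as an overall ambiguous constant. The underlying Fourier and transfer-matrix steps are standard, but each normalization has to be carried through carefully to match the formula on the nose.
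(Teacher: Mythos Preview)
Your outline is essentially correct and would lead to the result, but it follows a genuinely different route from the paper's proof. The paper does \emph{not} work with the small bipartite Kasteleyn matrix $K$ and a transfer matrix. Instead, it passes to the large Kasteleyn matrix $\hat K$ indexed by all vertices, observes $\det K(z)=\pm(\det\hat K(z))^{1/2}$ via the symmetry $\det K(z)=\det K(1/z)$, and then diagonalizes $\hat K$ \emph{completely} with explicit eigenvectors $f_{j,k}(x,y)=e^{2\pi i j x/(2m)}\sin\frac{\pi k y}{n+1}$ (Fourier in the periodic direction, sine in the Dirichlet direction), using Kasteleyn ``signs'' $i$ on vertical edges. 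The product of eigenvalues is then factored via the roots $\beta_k,\gamma_k$ of a quadratic; the substitution $\alpha_k=-i\beta_k$ and the hypothesis $m$ odd convert $(z-\beta_k^{2m})(z-\gamma_k^{2m})$ into $(z+\alpha_k^{2m})(z+\alpha_k^{-2m})$, after which the pairing $k\leftrightarrow n+1-k$ is the same as yours.

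What each approach buys: the paper's large-matrix-plus-square-root trick sidesteps the bipartite bookkeeping entirely and needs no transfer matrix; the horizontal Fourier and vertical sine diagonalize $\hat K$ in one stroke. Your approach stays with the small $K$ and is conceptually closer to the dimer transfer-matrix literature, but your assertion that $K$ block-diagonalizes in the sine basis is exactly the delicate point (the bipartition mixes parities of $x$ and $y$), and you flag this yourself as the main obstacle. Two small corrections: the reference to Proposition~\ref{indep} is misplaced, since $K_{2m,n}(z)$ carries a \emph{scalar} connection and only scalar gauge is needed; and the roots of $\lambda-\lambda^{-1}=-2\cos\theta_k$ are $\alpha_k$ and $-\alpha_k^{-1}$, not $\alpha_k^{\pm1}$, a sign that resurfaces precisely in the role of ``$m$ odd''.
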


\begin{figure}[t]
\begin{center}\includegraphics[width=3in]{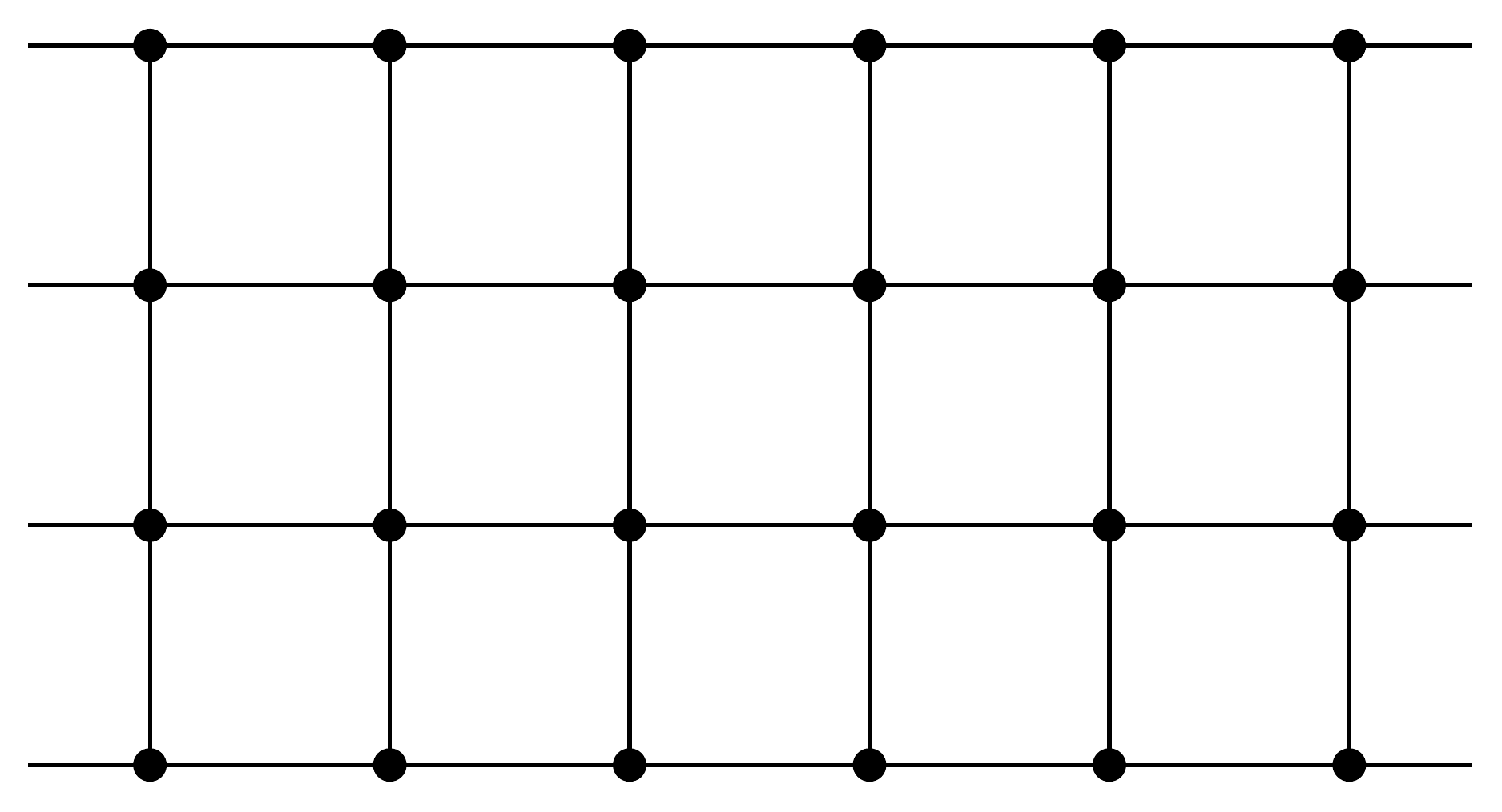}
\end{center}
\caption{\label{gridannulus} $\G_{6,4}$ is obtained by gluing the left and right sides of this figure.
To make a Kasteleyn matrix with flat connection having monodromy $z$ around the annulus, take $\zeta$ such that
$\zeta^{2m}=z$ and put connection $\zeta$ on all horizontal edges, directed east (and $1$ on vertical edges).}\end{figure} 

\begin{proof}
Recall that the Kasteleyn matrix $K$ has rows indexing white vertices and columns indexing black vertices.
We consider here the \emph{large} Kasteleyn matrix $\hat K$, with rows (and columns) indexing \emph{all} vertices, both black and white. 
We have $\hat K(z) = \begin{pmatrix}0&K(z)\\K^t(1/z)&0\end{pmatrix}$ and, by symmetry,
$\det K(z) = \det K(1/z)$ so 
$\det K(z)= \pm \det\hat K(z)^{1/2}.$ (The sign depends on choice of gauge and vertex order.)  

We put Kasteleyn ``signs" $i=\sqrt{-1}$ on vertical edges and $1$ on horizontal edges, as in \cite{Kenyon09Statisticalmechanics}.

Indexing vertices by their $x,y$-coordinates, where $(x,y)\in[0,2m-1]\times[1,n]$, the eigenvectors of $\hat K(z)$  are
$$f_{j,k}(x,y) = e^{2\pi ijx/(2m)}\sin\frac{\pi ky}{n+1}$$ 
where $j\in\{0,..,2m-1\}$ and $k\in\{1,2,\dots,n\}$.    
The corresponding eigenvalues are 
$$\lambda_{j,k} = \zeta e^{2\pi ij/(2m)}+\zeta^{-1}e^{-2\pi ij/(2m)} + 2i\cos\frac{\pi k}{n+1}.$$

Thus
\begin{align*}\det K &= \pm\left[\prod_{j=0}^{2m-1}\prod_{k=1}^{n}\zeta e^{2\pi ij/(2m)}+\zeta^{-1}e^{-2\pi ij/(2m)} + 2i\cos\frac{\pi k}{n+1}\right]^{1/2}\\
&= \pm\left[\prod_{k=1}^{n}\prod_{\zeta^{2m}=z}\zeta +\zeta^{-1} + 2i\cos\frac{\pi k}{n+1}\right]^{1/2}\\
&= \pm\left[\prod_{k=1}^{n}\prod_{\zeta^{2m}=z}\frac{(\zeta-\beta_k)(\zeta-\gamma_k)}{\zeta}\right]^{1/2}\\
&= \pm\left[\prod_{k=1}^{n}\frac{(z-\beta_k^{2m})(z-\gamma_k^{2m})}{-z}\right]^{1/2}
\end{align*}
where $\beta_k,\gamma_k$ are roots of the quadratic $\zeta(\zeta +\zeta^{-1} + 2i\cos\frac{\pi k}{n+1})$, that is,
$\beta_k,\gamma_k = i(-\cos\theta_k\pm\sqrt{1+\cos^2\theta_k})$ with $\theta_k=\frac{\pi k}{n+1}$. 
If $n$ is even
we can pair the $k$ and $n+1-k$ terms which are identical, to get
$$\det K=\pm\prod_{k=1}^{n/2}\frac{(z-\beta_k^{2m})(z-\gamma_k^{2m})}{z}.$$

If $n$ is odd the $k=(n+1)/2$ term is $-(z+1/z+2)$ (since $m$ is odd), yielding
$$\det K=\pm(\sqrt{z}+\frac1{\sqrt{z}})\prod_{k=1}^{(n-1)/2}\frac{(z-\beta_k^{2m})(z-\gamma_k^{2m})}{z}.$$

Letting $\alpha_k=-i\beta_k$ (and noting that $\gamma_k=\beta_k^{-1}$, and $m$ is odd) gives the result.
\end{proof}

\section{Appendix: coefficient extraction}\label{extract}

For the computation at the end of Section \ref{annulussection}, we are interested in the behavior of $P(u,v)=\sum_{j,k\geq0} c_{j,k}u^j v^k=\frac{\det\tilde K(u,v)}{\det\tilde K(1,1)}$
when $q$ is small, see (\ref{Kprod1}).
We compute here the exponent of $q$ in the leading-order term of the coefficient of $u^j v^k$ in the expansion
$$ \prod_{i=0}^\infty \frac{(1+3q^{2i+1}u+3q^{4i+2}v+q^{6i+3})(1+3q^{2i+1}v+3q^{4i+2}u+q^{6i+3})}{(1+3q^{2i+1}+3q^{4i+2}+q^{6i+3})(1+3q^{2i+1}+3q^{4i+2}+q^{6i+3})}$$
(we multiplied the second terms in the numerator and denominator by $q^{6i+3}$).
Now since as $q\to 0$ the denominator is $1+o(1)$, it suffices to consider the leading term in the numerator, which is the leading term in
$$ \prod_{i=0}^\infty (1+3q^{2i+1}u+3q^{4i+2}v)(1+3q^{2i+1}v+3q^{4i+2}u),$$
where we dropped the $q^{6i+3}$ terms which are irrelevant.

To find the $u^jv^k$ term, we need to take the ``$u$" term from $j$ factors and the ``$v$'' term from $k$ factors.
Let $A=\prod_{i=0}^\infty (1+3q^{2i+1}u+3q^{4i+2}v)$ and $B=\prod_{i=0}^\infty(1+3q^{2i+1}v+3q^{4i+2}u).$
From $A$ we take $\ell_1$ of the $u$ terms, with leading-order coefficients $q^x$ for $x\in\Lc_1$ for some subset $\Lc_1\subset\{1,3,5,\dots\}$ of cardinality $\ell_1$.
Likewise from $B$ we take $\ell_2=j-\ell_1$ of the $u$ terms, with coefficients $q^{2x}$ for $x\in\Lc_2$ for some subset $\Lc_2\subset\{1,3,5,\dots\}$
of cardinality $\ell_2$. Likewise we take $m_1$ of the $v$ terms from $A$ and $m_2=k-m_1$ of the $v$ terms from $B$, corresponding to subsets $\Mc_1,\Mc_2\subset\{1,3,5,\dots\}$ of cardinalities $m_1,m_2$,
and the corresponding coefficients are to leading order $q^{2x}$ for $x\in \Mc_1$ and $q^x$ for $x\in \Mc_2$.
We require $\Lc_1\cap\Mc_1=\emptyset = \Lc_2\cap\Mc_2$. 

Let $L_1=\sum_{i\in\Lc_1}i$ and likewise define $L_2,M_1,M_2$. 
We need to make these choices to minimize the exponent of the leading-order term of $u^jv^k$ which is $L_1+2L_2+2M_1+M_2$.
Note that 
$$L_1+M_1 = 1+3+5+\dots+(2\ell_1+2m_1-1) = (\ell_1+m_1)^2$$
and
$$L_2+M_2 = 1+3+5+\dots+(2\ell_2+2m_2-1) = (\ell_2+m_2)^2= (j+k-\ell_1-m_1)^2.$$
So 
$$L_1+2L_2+2M_1+M_2=(\ell_1+m_1)^2+(j+k-\ell_1-m_1)^2 + L_2+M_1.$$
The first two terms here are independent of the choices of individual terms (just depending on $\ell_1,m_1$), so we can just choose the individual terms to minimize
$L_2$ and $M_1$ separately, that is, $\Lc_2 = \{1,3,\dots,(2j-2\ell_1-1)\}$ and $\Mc_1=\{1,3,\dots,2m_1-1\}$, giving $L_2=(j-\ell_1)^2$ and $\M_1=m_1^2$.

We are left with minimizing, for fixed $j,k$,
$$L_1+2L_2+2M_1+M_2=(\ell_1+m_1)^2+(j+k-\ell_1-m_1)^2 +(j-\ell_1)^2 + m_1^2,$$
subject to the constraints that $\ell_1,m_1,\ell_2=j-\ell_1,m_2=k-m_1\ge0$. 

By a short calculus exercise, we get the minimum exponent of 
$\frac23(j^2+jk+k^2)$.
This is the minimum for real $\ell_1$, $m_1$; taking into account the fact that the minimum must be an integer leads
to the exponent $\delta_{j,k}:=\lceil\frac23(j^2+jk+k^2)\rceil$.

%Suppose $j\ge k$. Then the minimum for $\ell_1\ge0$ occurs when $m_1=0$,
%and we have (setting $m_1=0$ and differentiating with respect to $\ell_1$) $\ell_1=\frac23j+\frac13k$,
%which yields the minimum exponent of 
%$\frac23(j^2+jk+k^2)$.
%This is the minimum for real $\ell_1$; taking into account the fact that the minimum must be an integer leads
%to the exponent $\lceil\frac23(j^2+jk+k^2)\rceil$.

%The argument for $j<k$ is symmetric.

\bibliographystyle{alpha}
\bibliography{references.bib}

\end{document}